\documentclass[12pt]{amsart}
\usepackage{amssymb,amsmath,amsthm,latexsym}
\usepackage{mathrsfs}
\usepackage{a4wide}
\usepackage[usenames,dvipsnames]{color}
\usepackage{euscript}
\usepackage{graphicx}
\usepackage{mdwlist}
\usepackage{mathtools,dsfont}

\newtheorem{theorem}{Theorem}[section]
\newtheorem*{theorema}{Theorem A}
\newtheorem*{theoremb}{Theorem B}

\newtheorem{lemma}[theorem]{Lemma}

\newtheorem{proposition}[theorem]{Proposition}
\theoremstyle{remark}
\newtheorem{remark}[theorem]{Remark}
\newtheorem*{remark*}{Remark}
\theoremstyle{definition}
\newtheorem{definition}[theorem]{Definition}

\newtheorem{example}[theorem]{Example}

\numberwithin{equation}{section}
\makeatother

\newcommand{\n}[1]{\|#1\|}
\newcommand{\nn}[1]{{\vert\kern-0.25ex\vert\kern-0.25ex\vert #1 
    \vert\kern-0.25ex\vert\kern-0.25ex\vert}}
\newcommand{\abs}[1]{\vert #1\vert}
\newcommand{\spn}{\mathrm{span}}

\newcommand{\supp}{\mathrm{supp}}

\renewcommand{\leq}{\leqslant}
\renewcommand{\geq}{\geqslant}
\newcommand{\bb}{\boldsymbol}
\newcommand{\ccap}{\scalebox{0.95}{$\bigcap$}}

\newcommand{\R}{\mathbb{R}}
\newcommand{\N}{\mathbb{N}}
\newcommand{\Q}{\mathbb{Q}}

\newcommand{\e}{\varepsilon}

\newcommand{\ba}{\mathsf{ba}}
\newcommand{\PP}{\mathcal{P}}
\newcommand{\dd}{\mathrm{d}}
\newcommand{\ww}{\hat}
\newcommand{\oo}{\overline}
\newcommand{\w}{\widetilde}

\newcounter{smallromans}

\newenvironment{romanenumerate}
{\begin{list}{{\normalfont\textrm{(\roman{smallromans})}}}%
    {\usecounter{smallromans}\setlength{\itemindent}{0cm}%
      \setlength{\leftmargin}{5.5ex}\setlength{\labelwidth}{5.5ex}%
      \setlength{\topsep}{0.2ex}\setlength{\partopsep}{0ex}%
      \setlength{\itemsep}{0.2ex}}}%
  {\end{list}}

\newcounter{smallalphs}

  {\end{list}}

\DeclareFontFamily{U}{mathx}{\hyphenchar\font45}
\DeclareFontShape{U}{mathx}{m}{n}{
      <5> <6> <7> <8> <9> <10>
      <10.95> <12> <14.4> <17.28> <20.74> <24.88>
      mathx10
      }{}
\DeclareSymbolFont{mathx}{U}{mathx}{m}{n}
\DeclareFontSubstitution{U}{mathx}{m}{n}
\DeclareMathAccent{\widecheck}{0}{mathx}{"71}
\DeclareMathAccent{\wideparen}{0}{mathx}{"75}

\begin{document}
\date{February 17, 2016}
\title[Uncountable sets of unit vectors]{Uncountable sets of unit vectors\\
that are separated by more than 1}
\author[T. Kania]{Tomasz Kania}
\address{Mathematics Institute,
University of Warwick,
Gibbet Hill Rd, 
Coventry, CV4 7AL, 
England}
\email{tomasz.marcin.kania@gmail.com}

\author[T. Kochanek]{Tomasz Kochanek}
\address{Institute of Mathematics, Polish Academy of Sciences, \'Sniadeckich 8, 00-656 Warsaw, Poland {\rm and} Institute of Mathematics, University of Warsaw, Banacha~2, 02-097 Warsaw, Poland}
\email{tkoch@impan.pl}
\thanks{The research of the second-named author was supported by the Polish Ministry of Science and Higher Education in the years 2013-14, under Project No.~IP2012011072. The first-named author has also received funding from the European Research Council / ERC Grant Agreement No.~291497.}

\begin{abstract}Let $X$ be a Banach space. We study the circumstances under which there exists an uncountable set $\EuScript A\subset X$ of unit vectors such that $\|x-y\|>1$ for distinct $x,y\in \EuScript A$. We prove that such a set exists if $X$ is quasi-reflexive and non-separable; if $X$ is additionally super-reflexive then one can have $\|x-y\|\geqslant 1+\varepsilon$ for some $\varepsilon>0$ that depends only on $X$. If $K$ is a non-metrisable compact, Hausdorff space, then the unit sphere of $X=C(K)$ also contains such a subset; if moreover $K$ is perfectly normal, then one can find such a set with cardinality equal to the density of $X$; this solves a~problem left open by S.~K.~Mercourakis and G.~Vassiliadis. \end{abstract}

\subjclass[2010]{46B20, 46B04 (primary), and 46E15, 46B26 (secondary)} 

\keywords{Kottman's theorem, the Elton--Odell theorem, unit sphere, equilateral set, quasi-reflexive space, super-reflexive space, cardinal function}
\maketitle

\section{Introduction} The study of distances between unit vectors in Banach spaces has a~long history that originates perhaps with the classical Riesz lemma  (\cite{riesz}), whose centennial is to be celebrated soon. There are two closely related problems in the geometry of Banach spaces that are often considered---given a Banach space $X$, what are the possible cardinalities of equilateral sets in $X$ (a set in a metric space is \emph{equilateral} if all of its members are equidistant from each other) and what are the possible cardinalities of sets of unit vectors in $X$ that are apart by a certain distance (typically greater than one)?\medskip

One of the early results concerning equilateral sets is due to Petty who studied distances between unit vectors in finite-dimensional Banach spaces (\cite{petty}). It is therefore a natural question of whether every infinite-dimensional Banach space contains an infinite equilateral set. This is the case for spaces of large enough density as observed by Terenzi (\cite{terenzi}) and one can always re-norm the space in such a way that the new norm has arbitrarily small Banach--Mazur distance from the old one and there exists an infinite equilateral set with respect to the new norm (\cite{mv1, swanepoel}). Hypotheses such as containing a copy of $c_0$ or having a uniformly smooth norm are also sufficient for the existence of an infinite equilateral set (\cite{freeman, mv1}). In general, however, this is not the case (\cite{gm2, terenzi1}).\medskip

Kottman (\cite{kottman}) provided a powerful extension of the Riesz lemma. According to his result, the unit sphere of every infinite-dimensional Banach space contains an infinite set of vectors that are in distance greater than one from each other. (We shall call such sets \emph{$(1+)$-separated}.) In \cite[pp.~7--8]{diestel}, one may find a very elegant and surprisingly short proof of Kottman's theorem whose authorship is explained by Diestel as follows: {\it `We were shown this proof by Bob Huff who blames Tom Starbird for its simplicity.'} A yet another proof can be found in \cite{gm}. Elton and Odell (\cite{eltonodell}) gave a beautiful, Ramsey-theoretic proof of the following theorem: for each infinite-dimensional Banach space $X$ there exists $\varepsilon >0$ and an infinite $(1+\varepsilon)$-separated subset of the unit sphere of $X$. (Given $\delta>0$, a set is \emph{$\delta$-separated} if all of its member are in the distance at least $\delta$ from each other.) Kryczka and Prus (\cite{kryczkaprus}) proved that if $X$ is non-reflexive then the unit sphere of $X$ contains an infinite, $\sqrt[5]{4}$-separated subset.\medskip

All the above-mentioned constructions of separated subsets of the unit sphere yield actually sequences (countable sets). Of course, in the case of separable Banach spaces this is the best one may expect as all discrete subsets of separable metric spaces are countable. What happens beyond the separable case? The aim of this is paper is to answer this question at least partially. \medskip

Our starting point is the observation by Elton and Odell (\cite[Remark on p.~109]{eltonodell}) who noticed that for all $\varepsilon >0$ each $(1+\varepsilon)$-separated subset of the unit sphere of $c_0(\omega_1)$ is countable. For the reader's convenience, we present the proof in Section~\ref{prelim}. Curiously enough, the unit sphere of $c_0(\omega_1)$ contains an uncountable (1+)-separated subset (it was mentioned without a proof in \cite[p.~12]{gm}). The proof is so simple that we include it here. \medskip

Towards a contradiction, assume that each (1+)-separated set of unit vectors in $c_0(\omega_1)$ is countable and take one, $\{f_n\colon n\in \mathbb{N}\}$ say, that is maximal with respect to inclusion and whose each member assumes the value 1. The union $D$ of the supports of all $f_n$'s ($n\in \mathbb{N}$) is countable, thus $D=\{\alpha_1, \alpha_2, \ldots \}$ for some $\alpha_k<\omega_1$ ($k\in \mathbb{N}$). Pick $\alpha_0\notin D$ and set $f(\alpha_0)=1$, $f(\alpha_k)=-\frac{1}{k}$ ($k\in \mathbb{N}$) and $f(\alpha)=0$ otherwise. Then $f\in c_0(\omega_1)$, $\|f\|=1$ and $\|f-f_n\|>1$ for each $n$ which contradicts the maximality of $\{f_n\colon n\in \mathbb{N}\}$.\medskip

We prove that for a substantial class of non-separable Banach spaces one can find an uncountable (1+)-separated subset of the unit sphere. 

\begin{theorema}Let $X$ be a non-separable Banach space. Then\smallskip
\begin{romanenumerate}
\item if $X$ is quasi-reflexive, the unit sphere of $X$ contains an uncountable {\rm (1+)}-separated subset;\smallskip
\item if $X$ is super-reflexive, for each regular cardinal number $\kappa$ that does not exceed the density of $X$ there exist  $\varepsilon>0$ and a~${\rm (1+}\varepsilon{\rm )}$-separated subset of the unit sphere of $X$ that has cardinality $\kappa$.\newline \indent In particular, the unit sphere of a non-separable super-reflexive space contains an uncountable ${(\rm 1+}\varepsilon{\rm )}$-separated subset for some $\varepsilon>0$.\smallskip
\item if $X$ is a~WLD Banach space of density bigger than continuum, the unit sphere of $X^*$ contains an uncountable {\rm (1+)}-separated subset.\end{romanenumerate}
\end{theorema}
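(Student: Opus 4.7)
My overall strategy for the three parts is to construct the desired separated set by transfinite recursion: at each successor stage I select a new unit vector lying at the required distance from all those previously chosen. What distinguishes the three cases is the device used to carry through the successor step.

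\emph{For (i)}, by Zorn's lemma it is enough to prove that any countable $(1+)$-separated family $\{y_n\}_{n\in \N}\subset S_X$ can be strictly enlarged. Let $Y:=\oo{\spn}\{y_n\}$, which is separable. Since $X$ is quasi-reflexive so is $X^*$, and $X^*$ must be non-separable (a~separable dual would force $X^{**}$, and hence $X$, to be separable). For each $n$, using Bishop--Phelps, fix $f_n\in S_{X^*}$ that almost attains its norm at $y_n$. Exploiting the non-separability of $X^*$ together with the near-reflexive structure $X^{**}=X\oplus F$ with $\dim F<\infty$, I would produce a unit vector $y\in X$ that is essentially annihilated by the $f_n$'s and satisfies the attained Riesz condition $\n{y-z}\geq 1$ for every $z\in Y$, so in particular $\n{y-y_n}\geq 1$. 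The decisive step is to break each of these inequalities into a~strict one by a~small controlled perturbation, exactly in the spirit of the $c_0(\omega_1)$ example recalled in the introduction. Upgrading $\geq 1$ to $>1$ is where quasi-reflexivity is essential and is the crux of the argument.

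\emph{For (ii)}, invoke Enflo's renorming theorem to equip $X$ with an equivalent uniformly convex norm of modulus of convexity $\delta$. A~transfinite recursion of length $\kappa$ then does the job: at stage $\alpha<\kappa$, the closed span of $\{x_\beta:\beta<\alpha\}$ has density strictly below $\kappa\leq\mathrm{dens}\,X$, so it is a~proper subspace; reflexivity and the James--\v{S}mulian theorem produce $x_\alpha\in S_X$ whose (attained) distance to this span equals one. Uniform convexity then upgrades the pairwise distances $\geq 1$ between chosen unit vectors to $\geq 1+\varepsilon$ through a~midpoint estimate: if $\n{x_\alpha - x_\beta}$ were close to $1$ with $\n{x_\alpha}=\n{x_\beta}=1$, then $\delta$ would force $\n{x_\alpha+x_\beta}$ to be nearly $2$, which combined with the attained Riesz property yields a~contradiction. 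The resulting $\varepsilon$ depends only on $\delta$, hence only on $X$.

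\emph{For (iii)}, the WLD hypothesis provides a~Markushevich basis $\{(x_\alpha,f_\alpha):\alpha<\mathrm{dens}\,X\}$ with countably supported coordinate functionals. Suitable normalised differences of the $f_\alpha$'s yield a~large family in $S_{X^*}$ whose mutual distances are already $\geq 1$ by biorthogonality, and the hypothesis $\mathrm{dens}\,X>\mathfrak{c}$ is invoked through a~$\Delta$-system/pigeonhole argument on the countable supports: each $f_\alpha$ has a~natural `type' (a~finite partial description of its dominant behaviour) ranging over a~set of cardinality at most $\mathfrak{c}$, so some uncountable subfamily agrees in type, from which strict $(1+)$-separation is extracted by direct calculation. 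In all three cases the main technical obstacle is the familiar upgrade of an attained Riesz-type lower bound $\geq 1$ to the strict $>1$; uniform convexity handles it automatically in~(ii), but in~(i) and~(iii) this upgrade requires a~delicate perturbation or combinatorial extraction, which I expect to be where the bulk of the work is concentrated.
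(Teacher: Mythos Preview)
Your outline correctly isolates the recurring difficulty—upgrading a Riesz-type bound $\geq 1$ to a strict $>1$—but in each part the mechanism you propose either fails or is absent.

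\textbf{(i)} The paper does not enlarge an \emph{arbitrary} countable $(1+)$-separated set; it runs a Starbird-type transfinite induction that maintains, alongside $(x_\beta,x_\beta^\ast)$, the auxiliary condition $\bigcap_{\gamma\neq\beta}\ker x_\gamma^\ast\not\subseteq\ker x_\beta^\ast$ for every $\beta<\alpha$. This condition is what supplies, at each stage, a vector $y$ with $\langle x_\beta^\ast,y\rangle<0$ for all $\beta<\alpha$; setting $x_\alpha=(y+Kx)/\|y+Kx\|$ for $x\in\bigcap_\beta\ker x_\beta^\ast$ then gives strict separation immediately. Propagating the auxiliary condition is the real work and uses a ``generalised combination'' lemma (an infinitary version of ``$\bigcap_n\ker x_n^\ast\subseteq\ker y^\ast$ implies $y^\ast$ is a combination of the $x_n^\ast$'') whose norm estimate relies on the total-implies-norming property of quasi-reflexive spaces. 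Your ``small controlled perturbation in the spirit of $c_0(\omega_1)$'' supplies none of this machinery; without the negative-sign vector $y$ there is no evident way to break $\|y-y_n\|\geq 1$ into a strict inequality.

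\textbf{(ii)} Two problems. First, your midpoint claim is stated backwards: uniform convexity forces $\|x_\alpha+x_\beta\|$ \emph{away from} $2$ once $\|x_\alpha-x_\beta\|\geq 1$, not close to $2$. (A correct UC argument exists—compare $x_\alpha$ with the normalisation of $x_\alpha-x_\beta$, both nearly norming the same functional—but it is not what you wrote.) Second, and more seriously, Enflo renorming produces $(1+\varepsilon)$-separation only in the new uniformly convex norm, and such separation does not transfer across equivalent norms; the theorem concerns the \emph{given} norm on $X$. The paper avoids renorming entirely: it takes a PRI in the given norm and uses James' super-reflexive inequality (valid for monotone basic sequences in the original norm) to get a bounded operator $T$ into an $\ell_p$-sum of the PRI blocks; a bootstrapping argument on the ratio of lower to upper norm bounds of $T$ restricted to large subspaces then yields unit vectors with disjointly supported $T$-images, hence $(1+\varepsilon)$-separation directly in the original norm.

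\textbf{(iii)} Biorthogonality of an M-basis or Auerbach system gives only $\|f_\alpha-f_\beta\|\geq 1$, and your ``type plus $\Delta$-system'' step is too vague to evaluate—it is not clear what the types are or how matching them yields strict inequality. The paper's route is quite different: from a projectional skeleton it builds a length-$\mathfrak{c}^+$ Auerbach system in $X^\ast\times X^{\ast\ast}$ with weak$^\ast$-continuous second coordinates, assumes no uncountable $(1+)$-separated set exists, and runs a Kottman-style ``extension chain'' argument showing all such chains are countable; this produces $\mathfrak{c}^+$ disjointly supported maximal elements, and the Erd\H{o}s--Rado relation $\mathfrak{c}^+\to(\omega_1)_2^2$ then forces a contradiction. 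The hypothesis $\mathsf{d}(X)>\mathfrak{c}$ is used precisely to invoke Erd\H{o}s--Rado, not merely a pigeonhole over $\mathfrak{c}$-many patterns.
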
\noindent
A~Banach space $X$ is called \emph{quasi-reflexive} if $X^{**} / X$ is finite-dimensional; in particular, reflexive spaces are quasi-reflexive, \emph{a fortiori}. A Banach space $X$ is \emph{super-reflexive} if each ultrapower of $X$ is reflexive. The first part of this result is Theorem~\ref{gwiezdnyptak}, the second clause is Theorem~\ref{czupakabra}, whereas the third one is Theorem~\ref{long_PRI}. We borrowed the very idea of the proof of Theorem~\ref{gwiezdnyptak} from Starbird's proof of Kottman's theorem (\cite[p.~7--8]{diestel}) which is very finitary in its nature---it deals with norm estimates of certain linear combinations. We employ the transfinite induction in order to extract an uncountable (1+)-separated subset, however a good number of obstacles must be circumvented as, for instance, our `generalised linear combinations' are no longer finite.\medskip

In Section~\ref{finalsection}, we specialise to the class of Banach spaces of continuous functions on locally compact Hausdorff spaces. Propositions~\ref{hs}, \ref{td}, and \ref{maharam} had been obtained independently of Mercourakis and Vassiliadis (\cite{mv}) at the beginning of 2014---we take this opportunity to acknowledge their priority as these results overlap with \cite[Theorem 2]{mv}. The proofs we provide are in most cases different and it is for the reader to decide whether they are more elementary or not. \medskip

We also solve a problem left open by Mercourakis and Vassiliadis (\cite[Question 2.7(1)b]{mv}) who asked whether the unit sphere of $C(K)$, where $K$ is non-metrisable, contains an uncountable (1+)-separated subset. We prove actually a stronger result relating the cardinality of such sets to the density of $C(K)$, that is the minimal cardinality of a dense subset of $C(K)$.

\begin{theoremb}Let $K$ be a non-metrisable, compact Hausdorff space. Then, the unit sphere of $C(K)$ contains an uncountable, {\rm (1+)}-separated subset. Furthermore, \smallskip
\begin{romanenumerate}
\item if $K$ is not perfectly normal, then the unit sphere of $C(K)$ contains an uncountable 2-equilateral set;\smallskip
\item if $K$ is perfectly normal, then the unit sphere of $C(K)$ contains a {\rm (1+)}-separated subset of cardinality equal to the density of $C(K)$.\end{romanenumerate}\end{theoremb}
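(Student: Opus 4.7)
I would treat the two cases of the ``Furthermore'' clause separately.

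\emph{Case (i): $K$ is not perfectly normal.} Fix a closed set $F\subseteq K$ that fails to be $G_\delta$. By transfinite recursion on $\alpha<\omega_1$ I would construct $f_\alpha\in C(K,[-1,1])$, an open neighbourhood $V_\alpha\supseteq F$, and a point $p_\alpha\in K\setminus F$, with $f_\alpha|_{V_\alpha}\equiv 1$, $f_\alpha(p_\alpha)=-1$, and $p_\alpha\in\bigcap_{\beta<\alpha}V_\beta$. Such a $p_\alpha$ exists because the countable intersection $\bigcap_{\beta<\alpha}V_\beta$ is a $G_\delta$-set containing $F$, and hence must contain $F$ strictly (as $F$ is not $G_\delta$); $f_\alpha$ is then produced by Urysohn's lemma, with $V_\alpha$ an open set chosen to satisfy $F\subseteq V_\alpha\subseteq\overline{V_\alpha}\subseteq K\setminus\{p_\alpha\}$. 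For each $\beta<\alpha$ the inclusion $p_\alpha\in V_\beta$ gives $f_\beta(p_\alpha)=1$ whereas $f_\alpha(p_\alpha)=-1$, so $\|f_\alpha-f_\beta\|=2$. Hence $(f_\alpha)_{\alpha<\omega_1}$ is $2$-equilateral, which handles (i) as well as the initial $(1+)$-separation claim under this hypothesis.

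\emph{Case (ii): $K$ is perfectly normal.} Put $\kappa=d(C(K))$; recall that $d(C(K))=w(K)$ for compact Hausdorff $K$. I would recursively choose $f_\alpha\in S_{C(K)}$ for $\alpha<\kappa$: given the previously constructed $(f_\beta)_{\beta<\alpha}$, the closed unital subalgebra $A_\alpha\subseteq C(K)$ they generate has density at most $\max(|\alpha|,\aleph_0)<\kappa$, so $A_\alpha\neq C(K)$ and, by the Stone--Weierstrass theorem, $A_\alpha$ fails to separate some pair of distinct points $x_\alpha,y_\alpha\in K$. Urysohn's lemma then yields $f_\alpha\in C(K,[-1,1])$ with $f_\alpha(x_\alpha)=1$ and $f_\alpha(y_\alpha)=-1$; writing $c_\beta:=f_\beta(x_\alpha)=f_\beta(y_\alpha)$ produces the bound
\[
\|f_\alpha-f_\beta\|\;\geq\;\max\bigl(|1-c_\beta|,\,|1+c_\beta|\bigr)\;=\;1+|c_\beta|,
\]
so the recursion delivers strict $(1+)$-separation precisely when $c_\beta\neq 0$ for every $\beta<\alpha$, i.e.\ when $x_\alpha$ avoids $\bigcup_{\beta<\alpha}f_\beta^{-1}(0)$.

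The hard part is exactly this zero-avoidance requirement, and perfect normality should be decisive here: it permits the zero set of any $[0,1]$-valued continuous function on $K$ to be prescribed as an arbitrary closed set, so the inductive hypothesis can be strengthened to keep each $f_\beta^{-1}(0)$ inside a nowhere dense closed $G_\delta$-set selected in advance. A pigeonhole-type argument on the non-trivial fibres of the quotient $K\to K/{\sim_\alpha}$ (whose weight is strictly less than $w(K)=\kappa$, forcing abundant non-trivial fibres) should then place $x_\alpha$ inside a non-trivial $\sim_\alpha$-equivalence class while simultaneously outside the controlled union of zero sets. Coordinating this zero-set bookkeeping with the two-point Urysohn construction throughout the transfinite recursion is the technical crux of~(ii).
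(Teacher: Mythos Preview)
Your treatment of Case~(i) is correct and essentially coincides with the standard argument the paper invokes (it cites this as part of \cite[Theorem~2]{mv} without reproving it).

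In Case~(ii), however, there is a genuine gap. You have correctly identified both the Stone--Weierstrass framework and the obstruction: the estimate $\|f_\alpha-f_\beta\|\geq 1+|c_\beta|$ is strict only when $c_\beta\neq 0$. But your proposed remedy---keeping each $f_\beta^{-1}(0)$ nowhere dense and then finding a non-trivial $\sim_\alpha$-fibre that misses $\bigcup_{\beta<\alpha}f_\beta^{-1}(0)$---is not actually carried out, and it is not clear it can be. When $|\alpha|$ is uncountable, a union of $|\alpha|$ nowhere dense closed sets can easily cover any prescribed fibre (perfect normality gives no Baire-category control beyond countable unions), so the ``pigeonhole-type argument'' you allude to has no obvious mechanism. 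This is precisely the technical crux, and your proposal leaves it open.

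The paper's solution is to sidestep zero-avoidance entirely. After a preliminary reduction to the case where $K$ has no isolated points (using that perfectly normal compacta are hereditarily Lindel\"of, so the scattered part in the Cantor--Bendixson decomposition is countable), one works exclusively with \emph{locally sign-changing} functions: norm-one $f\in C(K)$ such that at every zero $z$ of $f$, the function takes both signs in every neighbourhood of $z$. A separate lemma shows (using perfect normality) that for any two distinct points $x,y$ one can build such an $f$ with $f\equiv 1$ near $x$ and $f\equiv -1$ near $y$. Now take a \emph{maximal} $(1+)$-separated family of locally sign-changing unit-norm functions; if it had cardinality below $w(K)$ it could not separate points, so pick $x,y$ unseparated and build $f$ as above. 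The point is that even when $f_\beta(x)=0$, the sign-changing property guarantees a point $t$ near $x$ with $f_\beta(t)<0$ while $f(t)=1$, whence $\|f-f_\beta\|>1$. Thus $f$ can be adjoined, contradicting maximality. No bookkeeping of zero sets is needed: the class of functions is chosen so that the bad case $c_\beta=0$ takes care of itself.
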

\noindent
The proof of Theorem B is a conjuction of Proposition~\ref{lemmamv} with Theorem~\ref{weight}.

\section{Preliminaries and auxiliary results}\label{prelim}

We work with real Banach spaces however most of the results can be easily generalised to the case of complex scalars. We adopt the von Neumann definition of an ordinal number and we identify cardinal numbers with initial ordinal numbers. By $\mathfrak{c}$, we denote the cardinality of continuum, whereas $\omega_1, \omega_2, \ldots$ are the first and, respectively, the second etc.~uncountable ordinal number. We follow the fashion that is very alive in certain circles to keep the symbols $\omega, \omega_1, \omega_2, \ldots$ etc. for infinite cardinal numbers (thus, $\omega=\aleph_0$, $\omega_1 = \aleph_1$, etc.). Given a cardinal number $\lambda$, we denote by ${\rm cf}\,\lambda$ the \emph{cofinality} of $\lambda$, that is, the smallest cardinal number $\kappa$ such that $\lambda$ can be written as a union of $\kappa$ many sets each of cardinality less than $\lambda$. A cardinal number $\lambda$ is \emph{regular} if $\lambda={\rm cf}\, \lambda$ and \emph{singular} otherwise. For the brevity of notation, we introduce the following cardinal functions.\medskip

Let $X$ be a Banach space. Denote by $S_X$ the unit sphere of $X$. We define

$$\begin{array}{lcl}\mathsf k (X) &=& \sup \{|\EuScript A|\colon \EuScript A\subseteq S_X, \, \|x-y\|>1\text{ for all distinct }x,y\in \EuScript A\}\text{, and}\\
\mathsf{eo} (X) &=& \sup \{|\EuScript A|\colon \EuScript A\subseteq S_X, \, \|x-y\|\geqslant 1+\varepsilon\text{ for some }\varepsilon>0\text{ and all distinct }x,y\in \EuScript A\}.\end{array}$$The attentive reader will note that the names of the just-defined cardinal functions refer to the aforementioned theorems of Kottman and Elton--Odell, respectively.\medskip

Given a Banach space $X$, denote by $\mathsf{d}(X)$ the minimal cardinality of a dense subset of $X$. Thus, we have the following immediate inequality:\medskip
$$ \mathsf{eo} (X) \leqslant \mathsf k(X) \leqslant \mathsf d(X).$$

Let $\Gamma$ be a set. Then the vector space $c_0(\Gamma)$ that consists of all scalar-valued functions $f$ on $\Gamma$ such that the set $\{\gamma\in \Gamma\colon |f(\gamma)|\geqslant \varepsilon\}$ is finite for all $\varepsilon > 0$ is a Banach space when endowed with the supremum norm. We regard $c_0(\omega_1)$ as a paradigm example of a Banach space $X$ for which the numbers $\mathsf k (X)$ and $\mathsf{eo}(X)$ differ. We have proved already in the introduction that $\mathsf{k}(c_0(\omega_1))=\omega_1$ and we mentioned that it is a result of Elton and Odell (\cite[p.~109]{eltonodell}) that $\mathsf{eo}(c_0(\omega_1))=\omega$. Let us record this formally here; we present also a detailed proof.

\begin{proposition}$\omega = \mathsf{eo}(c_0(\omega_1))<\mathsf{k}(c_0(\omega_1))=\omega_1.$ \end{proposition}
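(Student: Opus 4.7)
The inequality $\mathsf{eo}(c_0(\omega_1))\leq\mathsf{k}(c_0(\omega_1))$ is immediate from the definitions, and $\mathsf{k}(c_0(\omega_1))=\omega_1$ has been established in the introduction. Also, $\mathsf{eo}(c_0(\omega_1))\geq\omega$ follows at once from the Elton--Odell theorem applied to the infinite-dimensional space $c_0(\omega_1)$ (or even to the subspace $c_0$). So the only remaining task is to show that $\mathsf{eo}(c_0(\omega_1))\leq\omega$, that is, that for every $\varepsilon>0$, each $(1+\varepsilon)$-separated subset of $S_{c_0(\omega_1)}$ is countable.

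The plan is a standard combination of the $\Delta$-system lemma with a pigeonhole argument. Fix $\varepsilon>0$ and suppose, toward a contradiction, that there were an uncountable $(1+\varepsilon)$-separated family $\mathscr{A}\subseteq S_{c_0(\omega_1)}$. For each $f\in\mathscr{A}$ set
$$F_f=\bigl\{\alpha<\omega_1:\ |f(\alpha)|\geq\varepsilon/3\bigr\},$$
which is finite because $f\in c_0(\omega_1)$. First I would apply the $\Delta$-system lemma (Shanin's lemma) to the uncountable family $\{F_f:f\in\mathscr{A}\}$ of finite sets to extract an uncountable $\mathscr{A}'\subseteq\mathscr{A}$ such that $\{F_f:f\in\mathscr{A}'\}$ forms a $\Delta$-system with some finite root $R$; that is, $F_f\cap F_g=R$ for all distinct $f,g\in\mathscr{A}'$.

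Next, since $R$ is finite and every $f\in\mathscr{A}'$ maps $R$ into $[-1,1]$, a pigeonhole argument on a finite $\varepsilon/3$-net of the compact cube $[-1,1]^R$ yields an uncountable subfamily $\mathscr{A}''\subseteq\mathscr{A}'$ such that
$$|f(\alpha)-g(\alpha)|<\varepsilon/3\quad\text{for all }\alpha\in R\text{ and all }f,g\in\mathscr{A}''.$$
Now take any two distinct $f,g\in\mathscr{A}''$ and estimate $\|f-g\|_\infty$ coordinate by coordinate. On $R$, the pigeonhole step gives a bound of $\varepsilon/3$. For $\alpha\in F_f\setminus R$ we have $\alpha\notin F_g$, whence $|g(\alpha)|<\varepsilon/3$ and $|f(\alpha)-g(\alpha)|\leq|f(\alpha)|+\varepsilon/3\leq 1+\varepsilon/3$; the symmetric bound holds on $F_g\setminus R$. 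Finally, for $\alpha\notin F_f\cup F_g$ both $|f(\alpha)|$ and $|g(\alpha)|$ are smaller than $\varepsilon/3$, giving $|f(\alpha)-g(\alpha)|<2\varepsilon/3$. Altogether,
$$\|f-g\|_\infty\leq 1+\varepsilon/3<1+\varepsilon,$$
contradicting the $(1+\varepsilon)$-separation of $\mathscr{A}$.

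The only slightly delicate step is ensuring that the pigeonhole stabilisation preserves uncountability; this is standard (partition $\mathscr{A}'$ by a countable finite cover of $[-1,1]^R$ of mesh $<\varepsilon/3$ and pick an uncountable cell), but one has to be careful that the bound $1+\varepsilon/3$ on $F_f\setminus R$ exploits the strict inequality coming from the definition of $F_g$, so the net used must have mesh strictly less than $\varepsilon/3$ — a minor bookkeeping point rather than a genuine obstacle.
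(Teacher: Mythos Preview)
Your argument is correct and follows the same two-step scheme as the paper: apply the $\Delta$-system lemma to the finite ``large-coordinate'' sets, then pigeonhole on the finite root to find two vectors that are too close. The only cosmetic difference is that the paper pigeonholes via a sign-pattern map $p\colon\EuScript B\to\{-1,1\}^\Delta$ (using threshold $\varepsilon/2$) rather than your $\varepsilon/3$-net of $[-1,1]^R$, which gives the slightly sharper bound $\|x-y\|\leq 1+\varepsilon/2$ on the root but is otherwise the same idea.
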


\begin{proof}It is enough to prove that $\mathsf{eo}(c_0(\omega_1))$ is countable. \medskip

Assume that for some $\e>0$ there exists an uncountable $(1+\e)$-separated set $\EuScript A\subset S_{c_0(\omega_1)}$. For each $x\in \EuScript A$ define $$F_x=\Bigl\{\alpha\in [0,\omega_1)\colon\abs{x(\alpha)}>\frac{\e}{2}\Bigr\}$$which is, of course, a~finite set. According to the $\Delta$-system lemma (\cite[Theorem 1.5]{kunen}), there exists an uncountable subfamily $\EuScript B$ of $\EuScript A$ and a~finite set $\Delta\subset\omega_1$ such that
$$
F_x\cap F_y=\Delta\qquad(x,y\in \EuScript B,\, x\not=y).
$$
Consider any two distinct members $x$ and $y$ of $\EuScript B$. If $\alpha\in\omega_1\setminus\Delta$, then at least one of the coordinates $x(\alpha)$, $y(\alpha)$ lies inside the interval $[-\frac{\e}{2},\frac{\e}{2}]$, hence $\abs{x(\alpha)-y(\alpha)}\leqslant 1+\frac{\e}{2}$. Therefore, the validity of $\n{x-y}\geq 1+\e$ may only be witnessed by coordinates from $\Delta$. Define a~`pattern function' $p\colon \EuScript B\to\{-1,1\}^\Delta$ by 
$$
p(x)(\alpha)=\left\{\begin{array}{rl}
+1 & \mbox{if }x(\alpha)>\frac{\e}{2},\\
-1 & \mbox{if }x(\alpha)<-\frac{\e}{2}
\end{array}\right.\qquad (x\in \EuScript B, \alpha\in \Delta).
$$
There must exist two distinct vectors $x,y\in \EuScript B$ with $p(x)=p(y)$. Consequently, $$\abs{x(\alpha)-y(\alpha)}\leq 1-\frac{\e}{2}\qquad (\alpha\in\Delta).$$ Therefore, $\n{x-y}\leq 1+\frac{\e}{2}$ and we arrive at a~contradiction.\end{proof}

We proceed now to general results concerning $\mathsf{k}(X)$ and $\mathsf{eo}(X)$. The following lemma is a well-known observation; it can be found, \emph{e.g.}, in \cite[Lemma 6]{martini}. We provide here a simple proof for the sake of completeness.

\begin{lemma}\label{convex}Let $X$ be a normed space and let $x,y\in X$ be non-zero vectors such that $\|x\|, \|y\|\leqslant 1$ and $\|x-y\|\geqslant 1$. Then 
$$\left\| \frac{x}{\|x\|} - \frac{y}{\|y\|} \right\| \geqslant\|x-y\|.$$\end{lemma}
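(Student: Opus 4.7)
The plan is to reduce the two-variable renormalisation to a single computation by scaling out one of the denominators. Without loss of generality assume $\|x\|\leqslant \|y\|$, put $t=\|x\|/\|y\|\in (0,1]$, and observe that
\[
\frac{x}{\|x\|}-\frac{y}{\|y\|}=\frac{1}{\|x\|}\bigl(x-ty\bigr),
\]
so the claim becomes $\|x-ty\|\geqslant \|x\|\cdot \|x-y\|$.

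The next step is to apply the reverse triangle inequality to the decomposition $x-ty=(x-y)+(1-t)y$, which yields
\[
\|x-ty\|\;\geqslant\;\|x-y\|-(1-t)\|y\|\;=\;\|x-y\|-(\|y\|-\|x\|).
\]
From here everything is arithmetic: since $\|y\|\leqslant 1$ one has $\|y\|-\|x\|\leqslant 1-\|x\|$, and since $\|x-y\|\geqslant 1$ one has $(1-\|x\|)\|x-y\|\geqslant 1-\|x\|$. Chaining these,
\[
\|x-y\|-(\|y\|-\|x\|)\;\geqslant\;\|x-y\|-(1-\|x\|)\;\geqslant\;\|x\|\cdot \|x-y\|,
\]
which is exactly what is needed.

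There is no real obstacle here; the only subtlety is choosing the right way to express $x/\|x\|-y/\|y\|$ so that the condition $\|x-y\|\geqslant 1$ can actually be leveraged — a~naive splitting such as $(x-y)/\|x\|+y(1/\|x\|-1/\|y\|)$ loses sign information and produces a useless upper bound. Factoring out $1/\|x\|$ and writing the remainder in the form $(x-y)+(1-t)y$ keeps the hypothesis $\|x-y\|\geqslant 1$ in play, and the argument then closes in three lines.
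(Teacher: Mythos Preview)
Your proof is correct. The paper's argument is slightly different: assuming (with the opposite WLOG) $\|x\|\geqslant\|y\|$, it introduces the convex function $g(t)=\|x-ty\|$ and observes that
\[
\left\|\frac{x}{\|x\|}-\frac{y}{\|y\|}\right\|=\frac{1}{\|x\|}\,g\!\left(\frac{\|x\|}{\|y\|}\right)\geqslant g\!\left(\frac{\|x\|}{\|y\|}\right);
\]
since $g(0)=\|x\|\leqslant 1\leqslant\|x-y\|=g(1)$, convexity forces $g(s)\geqslant g(1)$ for every $s\geqslant 1$, in particular for $s=\|x\|/\|y\|$. You replace the convexity step by the reverse triangle inequality applied to $x-ty=(x-y)+(1-t)y$ and finish with two elementary arithmetic estimates. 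Both routes start from the same scaling identity and both spend the hypothesis $\|x-y\|\geqslant 1$ at a single pivotal point; the convexity viewpoint is conceptually a touch cleaner (one abstract property does the work), while yours is more self-contained and avoids invoking convexity at all.
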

\begin{proof}Assume with no loss of generality that $\n{x}\geqslant\n{y}$ and consider the function given by $g(t) = \|x - ty\|$ ($t\in \mathbb{R}$); it is easy to verify that this is a~convex function. We have
$$
\left\| \frac{x}{\|x\|} - \frac{y}{\|y\|} \right\| =\frac{1}{\n{x}}\left\|x-\frac{\n{x}}{\n{y}}y\right\|\geqslant g\Bigl(\frac{\n{x}}{\n{y}}\Bigr).
$$
Since $g(0)=\n{x}\leqslant 1$ and $g(1)=\n{x-y}\geqslant 1$, the convexity of $g$ implies that $g(\n{x}/\n{y})$ must be at least equal to $g(1)$.\end{proof}

\begin{proposition}\label{lifting}Suppose that $X$ and $Y$ are Banach spaces and $Y$ is isometric to a quotient of $X$. If for some $\varepsilon>0$ there exists a $(1+\varepsilon)$-separated subset $\{y_i\colon i\in I\}$ of unit vectors in $Y$, then for all $\delta\in (0,\varepsilon)$ there exists a $(1+\delta)$-separated subset of the unit sphere of $X$ with cardinality $|I|$. Consequently, $\mathsf{eo}(Y)\leqslant \mathsf{eo}(X).$ \end{proposition}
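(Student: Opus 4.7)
Let $q\colon X\twoheadrightarrow Y$ denote a quotient operator realizing the isometric identification of $Y$ with $X/\ker q$, and fix $\delta\in(0,\varepsilon)$. The plan is first to lift each $y_i$ to a near-unit vector in $X$, then exploit Lemma~\ref{convex} to pass to the unit sphere while losing only a controllable amount of separation.

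Choose a parameter $\eta>0$ small enough so that $(1+\varepsilon)/(1+\eta)\geqslant 1+\delta$; concretely, $\eta\leqslant(\varepsilon-\delta)/(1+\delta)$ does the job, and one may additionally take $\eta\leqslant\varepsilon$. Since $\|y_i\|=1$ and $Y$ carries the quotient norm, for every $i\in I$ we can pick a representative $x_i\in X$ with $q(x_i)=y_i$ and $\|x_i\|\leqslant 1+\eta$. Because $q$ is norm-decreasing, the inequality
$$\|x_i-x_j\|\geqslant \|q(x_i)-q(x_j)\|=\|y_i-y_j\|\geqslant 1+\varepsilon$$
holds for distinct $i,j\in I$; in particular the $x_i$ are pairwise distinct, so the collection $\{x_i:i\in I\}$ has cardinality $|I|$, and each $x_i$ is nonzero.

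Next set $x_i':=x_i/(1+\eta)$, so that $\|x_i'\|\leqslant 1$ and
$$\|x_i'-x_j'\|=\frac{\|x_i-x_j\|}{1+\eta}\geqslant\frac{1+\varepsilon}{1+\eta}\geqslant 1+\delta\geqslant 1.$$
Thus the hypotheses of Lemma~\ref{convex} are satisfied by $x_i'$ and $x_j'$, and we conclude
$$\left\|\frac{x_i}{\|x_i\|}-\frac{x_j}{\|x_j\|}\right\|=\left\|\frac{x_i'}{\|x_i'\|}-\frac{x_j'}{\|x_j'\|}\right\|\geqslant\|x_i'-x_j'\|\geqslant 1+\delta.$$
The family $\{x_i/\|x_i\|:i\in I\}$ is therefore a $(1+\delta)$-separated subset of the unit sphere of $X$ of cardinality $|I|$.

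For the final assertion, any $(1+\varepsilon)$-separated subset of $S_Y$ of cardinality $\kappa\leqslant\mathsf{eo}(Y)$ yields, by the construction above applied with any choice of $\delta\in(0,\varepsilon)$, a $(1+\delta)$-separated subset of $S_X$ of the same cardinality, so $\kappa\leqslant\mathsf{eo}(X)$; taking the supremum over such $\kappa$ gives $\mathsf{eo}(Y)\leqslant\mathsf{eo}(X)$. No step poses a serious obstacle: the only subtlety is the mild loss of separation incurred when normalizing, which is precisely what Lemma~\ref{convex} is designed to handle, and which is why the conclusion must be phrased with $\delta<\varepsilon$ rather than $\delta=\varepsilon$.
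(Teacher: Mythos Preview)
Your proof is correct and follows essentially the same approach as the paper's: lift each $y_i$ to $x_i\in X$ with $\|x_i\|\leqslant 1+\eta$, then normalise and invoke Lemma~\ref{convex}. The only difference is cosmetic---you make the rescaling step $x_i'=x_i/(1+\eta)$ explicit in order to meet the hypotheses of Lemma~\ref{convex}, whereas the paper absorbs this into a single chain of inequalities.
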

\noindent \emph{Proof.} Let $\pi\colon X\to Y$ be the quotient map and choose $\eta\in (0, \frac{\varepsilon-\delta}{1+\delta})$. For each $i\in I$ let $u_i\in X$ so that $\pi(u_i) = y_i$ and $\|u_i\|\leqslant 1+\eta$. Set $x_i = u_i / \|u_i\|$ ($i\in I$). Then using Lemma~\ref{convex}, we have
\[\pushQED{\qed}\|x_i - x_j\| = \left\| \frac{u_i}{\|u_i\|} - \frac{u_j}{\|u_j\|}\right\|\geqslant \frac{\|\pi(u_i-u_j)\|}{1+\eta}= \frac{\|y_i - y_j\|}{1+\eta} > 1+\delta.\qedhere
\popQED
\]

\begin{example}Let $J\!L$ be the Johnson--Lindenstrauss space (\cite{jl}). Denote by $M$ the canonical copy of $c_0$ in $J\! L$. The quotient space $J\!L/M$ is isometric to $\ell_2(\mathfrak{c})$ hence $\mathsf{eo}(J\!L)\geqslant \mathfrak{c}$. On the other hand, the cardinality of $J\! L$ itself is $\mathfrak{c}$, so $\mathsf{eo}(J\!L)= \mathfrak{c}$. \end{example}
 
\begin{proposition}\label{reflexive}Let $X$ be a Banach space and let $M\subseteq X$ be a reflexive subspace. Set $Y = X/M$. Then $\mathsf{k}(Y)\leqslant \mathsf{k}(X).$\end{proposition}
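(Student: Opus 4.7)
The strategy is to lift a $(1+)$-separated family from $S_Y$ to $S_X$ without losing any of the separation. The crucial point is that a reflexive subspace is \emph{proximinal}, so that the quotient map $\pi\colon X\to Y$ attains its norm on every coset; this is exactly what avoids the $\varepsilon$-loss that afflicts Proposition~\ref{lifting}---a loss that would be fatal here, since we only have the strict inequality $\|y_i-y_j\|>1$, not any uniform gap above~$1$.

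First I would establish the proximinality of $M$ in $X$: for every $u\in X$ there exists $m_\star\in M$ with $\|u+m_\star\|=\|\pi(u)\|$. Fix $u\in X$, put $d=\|\pi(u)\|$, and pick $(m_n)\subset M$ with $\|u+m_n\|\to d$. The sequence $(m_n)$ is bounded in $M$; by reflexivity of $M$ and the Eberlein--\v{S}mulian theorem, there is a weakly convergent subsequence $m_{n_k}\rightharpoonup m_\star\in M$. Then $u+m_{n_k}\rightharpoonup u+m_\star$ weakly in $X$, and weak lower semicontinuity of the norm gives $\|u+m_\star\|\leq \liminf_k\|u+m_{n_k}\|=d$; the reverse inequality is built into the definition of $d$.

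With proximinality in hand, I would take any $(1+)$-separated family $\{y_i:i\in I\}\subset S_Y$, and for each $i$ choose $x_i\in\pi^{-1}(y_i)$ with $\|x_i\|=\|y_i\|=1$. For distinct $i,j\in I$,
$$
\|x_i-x_j\|\geq \|\pi(x_i)-\pi(x_j)\|=\|y_i-y_j\|>1,
$$
so $\{x_i:i\in I\}\subset S_X$ is a $(1+)$-separated set of cardinality $|I|$. Passing to the supremum over such $I$ yields $\mathsf{k}(Y)\leq\mathsf{k}(X)$, as required.

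The only conceptually nontrivial ingredient is the proximinality lemma; the main obstacle one must recognise is that the naive perturbation lift used in Proposition~\ref{lifting} does not work here, because converting a strict $(1+)$-condition into a $(1+\delta)$-condition is impossible, and so one truly needs a norm-preserving---not merely near-optimal---lift. Reflexivity of $M$ is precisely what supplies this.
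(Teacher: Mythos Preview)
Your proof is correct and follows exactly the approach of the paper: use reflexivity of $M$ to obtain proximinality, lift each $y_i$ to a unit vector $x_i$ with $\pi(x_i)=y_i$, and conclude via $\|x_i-x_j\|\geq\|\pi(x_i-x_j)\|=\|y_i-y_j\|>1$. The only difference is that the paper states the proximinality of reflexive subspaces as a known fact, whereas you supply the standard Eberlein--\v{S}mulian argument for it.
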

\begin{proof}Since $M$ is reflexive, for each $y\in Y$ there is $x\in X$ such that $\pi(x)=y$ and $\|x\|=\|y\|$. Given a $(1+)$-separated family of unit vectors in $Y$, for each member of this family choose an appropriate lifting and use the fact that $\pi$ does not increase the norm.\end{proof}

\section{Reflexive spaces}\label{quasireflexivesection}

\subsection{Quasi-reflexive spaces}
We start this section with giving a generalisation of Starbird's argument, mentioned in the Introduction, to the non-separable setting. According to a result by Civin and Yood (\cite[Theorem 4.6]{cy}), every non-separable quasi-reflexive space contains a~non-separable reflexive subspace, whence the next result implies assertion~(i) of Theorem~A.
\begin{theorem}\label{gwiezdnyptak}
Let $X$ be a Banach space containing a non-separable reflexive subspace. Then $\mathsf{k}(X)$ is uncountable.

\end{theorem}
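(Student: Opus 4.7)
My plan is to first reduce to the case where $X$ itself is non-separable and reflexive: if $M \subseteq X$ is a non-separable reflexive subspace, then any $(1+)$-separated family of unit vectors in $M$ is tautologically such a family in $X$, so it suffices to show $\mathsf{k}(M)$ is uncountable.

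The construction would then proceed by transfinite recursion of length $\omega_1$. At the step $\alpha < \omega_1$, I would try to produce a unit vector $x_\alpha \in S_X$ together with a norming functional $f_\alpha \in S_{X^*}$ (so $f_\alpha(x_\alpha) = 1$) for which the strict inequality $\|x_\alpha - x_\beta\| > 1$ holds for every $\beta < \alpha$. The natural sufficient condition is to arrange $f_\beta(x_\alpha) < 0$ for all $\beta < \alpha$, since this immediately gives $\|x_\beta - x_\alpha\| \geq f_\beta(x_\beta) - f_\beta(x_\alpha) > 1$. A first step in this direction: the closed subspace $N_\alpha := \bigcap_{\beta < \alpha} \ker f_\beta$ has countable codimension in $X$, so since $X$ is non-separable, $N_\alpha$ is still non-separable, in particular it contains a unit vector $e_\alpha$, and one has at least $\|e_\alpha - x_\beta\| \geq 1$. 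But this only yields non-strict separation, which is insufficient.

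To upgrade `$\geq 1$' to `$>1$', I would mimic the Starbird perturbation from \cite[pp.~7--8]{diestel}. In the finite version one modifies such an $e$ by a small convex combination of the form $\sum \lambda_i x_i$ and then normalises (invoking Lemma~\ref{convex} to control how normalisation affects separation); a direct pairing with $f_j$ reveals that with judicious positive weights the strict inequalities are recovered. In our setting $\alpha$ is a countably infinite ordinal, so the analogous combination $\sum_{\beta < \alpha} \lambda_\beta x_\beta$ must be a countable sum --- these are the \emph{generalised linear combinations} referred to in the introduction. Reflexivity of $X$ is what lets such sums make sense as weak limits: one runs the Starbird perturbation over an increasing cofinal sequence of finite subsets of $\{x_\beta : \beta < \alpha\}$, producing a bounded sequence in $B_X$ whose weak cluster point (extant by weak compactness of $B_X$) serves, after normalisation, as the candidate $x_\alpha$. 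The norming functional $f_\alpha$ is then supplied by Hahn--Banach.

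The hard part --- and the reason the authors warn that `a good number of obstacles must be circumvented' --- is that strict inequalities are generally \emph{not} preserved by weak limits of unit vectors, so one must quantify the Starbird perturbation carefully: the weights $\lambda_\beta$ have to be chosen (for instance, with a geometrically decreasing schedule) so that the strict gap built into each finite approximation survives the limiting process, possibly controlled via the lower semicontinuity of the norm under weak convergence together with Lemma~\ref{convex}. Executing this quantitative step for every $\alpha < \omega_1$ would complete the recursion and deliver the required uncountable $(1+)$-separated subset of $S_X$, witnessing $\mathsf{k}(X) \geq \omega_1$.
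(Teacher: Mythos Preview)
Your outline shares the opening moves with the paper---reduce to $X$ reflexive, run a transfinite recursion, and try to arrange $f_\beta(x_\alpha)<0$ for all $\beta<\alpha$---but the mechanism you propose for the ``hard part'' is not the one the paper uses, and as stated it does not work. Two concrete problems. First, perturbing $e_\alpha$ by a combination $\sum_{\beta<\alpha}\lambda_\beta x_\beta$ of the \emph{previously constructed vectors} produces cross terms: $f_\gamma\bigl(e_\alpha-\sum_\beta\lambda_\beta x_\beta\bigr)=-\lambda_\gamma-\sum_{\beta\neq\gamma}\lambda_\beta f_\gamma(x_\beta)$, and the off-diagonal sum is uncontrolled (you only know $|f_\gamma(x_\beta)|\leq 1$), so there is no evident choice of summable weights making all of these strictly negative simultaneously. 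Second, your fallback---run the finite Starbird construction along an exhaustion and extract a weak cluster point---founders exactly where you flag it: weak limits turn strict inequalities $f_\beta(u_n)<0$ into $f_\beta(u)\leq 0$, and lower semicontinuity of the norm gives inequalities in the wrong direction to rescue $\|x_\alpha-x_\beta\|>1$. You acknowledge this but do not supply a fix, and none is apparent along these lines.

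The paper avoids weak limits altogether. It carries an extra inductive hypothesis (iii): for every $\beta<\alpha$, $\bigcap_{\gamma\neq\beta}\ker f_\gamma\not\subseteq\ker f_\beta$. This allows one to pick, for each $\beta<\alpha$, a unit vector $z_\beta$ annihilated by \emph{all} $f_\gamma$ with $\gamma\neq\beta$ and with $f_\beta(z_\beta)<0$; then $y=\sum_{\beta<\alpha}c_\beta z_\beta$ (any summable positive $c_\beta$) satisfies $f_\beta(y)=c_\beta f_\beta(z_\beta)<0$ for every $\beta$---no cross terms, no limit needed. One then sets $x_\alpha=(y+Kx)/\|y+Kx\|$ for a suitable $x\in N_\alpha$ and large $K$. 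The genuine work is showing that condition (iii) propagates, and this is where reflexivity enters in a different guise: the paper proves a ``generalised combination'' lemma (Lemma~\ref{gc}) saying that in a quasi-reflexive space, if $\bigcap_n\ker x_n^\ast\subseteq\ker y^\ast$ then $y^\ast$ is represented by an integral against some $\mu\in\mathsf{ba}(\mathcal{P}\mathbb{N})$ over $\mathrm{span}_{\mathbb{Q}}\{x_n^\ast\}$, with a norm bound on $\mu$. This bound is then played off against the choice of $K$ to show $f_\alpha=x_\alpha^\ast$ cannot be such a generalised combination of the earlier functionals, hence (iii) holds at stage $\alpha+1$. In short: the missing idea is condition (iii) and the auxiliary vectors $z_\beta$ it supplies, together with Lemma~\ref{gc} to maintain (iii); your weak-limit scheme is not how the obstacle is overcome.
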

Before going into action, we need some preparations.\medskip 

Let $y^\ast\in X^\ast$ and let $(x_n^\ast)_{n=1}^\infty$ be a bounded sequence in $X^\ast$. Denote by $\mathrm{span}_\Q\{x_n^\ast\colon n\in\N\}$ the set of all (finite) linear combinations of $x_n^\ast$'s with rational coefficients. We say that $y^\ast$ is a {\it generalised combination} of $x_n^\ast$'s ($n\in \mathbb{N}$) if for some enumeration $\{z_1^\ast,z_2^\ast,\ldots\}$ of the set $\mathrm{span}_\Q\{x_n^\ast\colon n\in\N\}\cap B_{X^\ast}$ there exists a~measure $\mu\in\ba(\PP\N)$ (here $\ba(\PP\N)$ denotes the family of all scalar-valued, bounded, finitely additive measures on the $\sigma$-algebra $\PP\N$ of all subsets of $\N$; we identify this space with $\ell_\infty^*$) such that 
\begin{equation}\label{mu}
\langle y^\ast, x\rangle=\int\limits_\N \langle z_n^\ast, x\rangle\,\mu(\dd n)\qquad(x\in X).
\end{equation}
It is a well-known result from linear algebra that given finitely many linear functionals $y^\ast,x_1^\ast,\ldots,x_N^\ast$ so that $y^\ast$ vanishes whenever all $x_i^\ast$'s vanish, the functional $y^\ast$ must be a~linear combination of $x_i^\ast$'s (see, {\it e.g.}, \cite[Lemma~3.9]{rudin}). The following lemma yields an infinite version of this statement.  We shall use the fact that for a quasi-reflexive space $X$ every total subspace $M$ of $X^\ast$ is norming (see, \emph{e.g.}, \cite{petunin, singer}), that is, for some positive constant $c$ we have $$\sup\bigl\{\abs{\langle x^\ast, x\rangle}\colon x^\ast\in M\cap B_{X^\ast}\bigr\}\geq c\n{x}\qquad(x\in X).$$ (In fact, this property characterises quasi-reflexive spaces, as was shown by Davis and Lindenstrauss in \cite{davis_lindenstrauss}.)
\begin{lemma}\label{gc}
Let $X$ be a quasi-reflexive Banach space. Suppose that $y^\ast\in X^\ast$ and let $(x_n^\ast)_{n=1}^\infty$ be a~bounded sequence in $X^\ast$ such that $$\bigcap_{n=1}^\infty\mathrm{ker}(x_n^\ast)\subseteq\mathrm{ker}(y^\ast).$$ Then $y^\ast$ is a generalised combination of $x_n^\ast$'s {\rm (}$n\in \mathbb{N}${\rm )}.
\end{lemma}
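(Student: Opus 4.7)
The plan is to realise $y^\ast$ as $\mu\circ T$, where $T\colon X\to\ell_\infty$ is defined by $(Tx)(n)=\langle z_n^\ast, x\rangle$ and $\mu\in\ba(\PP\N)=\ell_\infty^\ast$. Boundedness of $T$ (with $\n{T}\leq 1$) follows from $\n{z_n^\ast}\leq 1$. The hypothesis $\ccap_n\ker(x_n^\ast)\subseteq\ker(y^\ast)$ reads $\ker T\subseteq\ker y^\ast$, and hence $y^\ast$ factors through $T$: the formula $\phi(Tx)=\langle y^\ast, x\rangle$ unambiguously defines a linear functional $\phi$ on $T(X)\subseteq\ell_\infty$. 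Provided that $\phi$ is continuous for $\n{\cdot}_\infty$, the Hahn--Banach theorem extends it to some $\w\phi\in\ell_\infty^\ast$, whose representing measure then serves as the desired $\mu$ in \eqref{mu}.

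Everything therefore reduces to producing $C>0$ such that
\[
\abs{\langle y^\ast, x\rangle}\leq C\sup_n\abs{\langle z_n^\ast, x\rangle}\qquad(x\in X).
\]
Set $N=\oo{\spn\{x_n^\ast\colon n\in\N\}}$ (norm closure in $X^\ast$) and $X_1={}^\perp N=\ccap_n\ker(x_n^\ast)$. By the bipolar theorem, $X_1^\perp=\oo{N}^{\,w\ast}$, and the hypothesis of the lemma is nothing but $y^\ast\in X_1^\perp$; in particular, $y^\ast$ annihilates $X_1$ and descends to a functional on $X/X_1$ of the same norm.

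The quotient $X/X_1$ is still quasi-reflexive---quasi-reflexivity passes to quotients, as a short argument with the finite-dimensional $X^{\ast\ast}/X$ reveals---and the canonical image of $N$ in $(X/X_1)^\ast\cong X_1^\perp$ is total by the very definition of $X_1$. Invoking the theorem cited just before the lemma (total subspaces of duals of quasi-reflexive spaces are norming), one obtains $c>0$ with
\[
c\n{Qx}_{X/X_1}\leq\sup\bigl\{\abs{\langle z^\ast, x\rangle}\colon z^\ast\in N\cap B_{X^\ast}\bigr\}\qquad(x\in X),
\]
where $Q\colon X\to X/X_1$ is the quotient map. Since $\spn_\Q\{x_n^\ast\}\cap B_{X^\ast}$ is norm-dense in~$N\cap B_{X^\ast}$, the right-hand side equals $\sup_n\abs{\langle z_n^\ast, x\rangle}$. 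Coupled with the trivial inequality $\abs{\langle y^\ast, x\rangle}\leq\n{y^\ast}\cdot\n{Qx}_{X/X_1}$, this yields the bound on $\phi$ with $C=c^{-1}\n{y^\ast}$, and the Hahn--Banach extension completes the argument.

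The principal obstacle is the third paragraph. \emph{A priori}, the hypothesis places $y^\ast$ only in the weak-$\ast$ closure of $N$; since $N$ need not be weak-$\ast$ closed, this alone does not control $\langle y^\ast, x\rangle$ by quantities involving the sequence~$(z_n^\ast)$. The norming property furnished by quasi-reflexivity is precisely what bridges the gap and legitimises the final Hahn--Banach step.
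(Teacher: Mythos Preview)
Your argument is correct and follows essentially the same route as the paper: define the evaluation map into $\ell_\infty$, pass to the quotient by $\bigcap_n\ker(x_n^\ast)$, invoke quasi-reflexivity (total subspaces of the dual are norming) to bound the induced functional, and extend by Hahn--Banach. Your version is in fact marginally cleaner, since by working directly with the quotient norm rather than choosing a near-optimal coset representative $y_x$ you obtain $\n{\mu}\leq c^{-1}\n{y^\ast}$ in place of the paper's $2c^{-1}\n{y^\ast}$.
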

\begin{proof}
Set $N=\ccap_{n=1}^\infty\mathrm{ker}(x_n^\ast)$ and let $z_1^\ast,z_2^\ast,\ldots$ be any enumeration of all vectors from $\spn_\Q\{x_n^\ast\}_{n=1}^\infty\cap B_{X^\ast}$. Define a linear map $\Lambda\colon X\to\ell_\infty$ by $$\Lambda(x)=(\langle z_n^\ast, x\rangle)_{n=1}^\infty\qquad (x\in X)$$ and observe that, by our assumption, we have $\langle y^\ast, x\rangle =\langle y^\ast, x^\prime\rangle$ whenever $\Lambda(x)=\Lambda(x^\prime)$. This makes it possible to define a linear map $f\colon\ell_\infty\supset\Lambda(X)\to\R$ by $\langle f, \Lambda(x)\rangle =\langle y^\ast, x\rangle$. \medskip

Since quotients of a quasi-reflexive space are quasi-reflexive too (\cite[Corollary 4.2]{cy}), the space $X/N$ is quasi-reflexive. For any functional $x^\ast\in X^\ast$ that vanishes on $N$ define $\ww{x}^\ast\in (X/N)^\ast$ by $\langle \ww{x}^\ast, x+N\rangle =\langle x^\ast, x\rangle$; of course, $\n{x^\ast}=\n{\ww{x}^\ast}$ (\emph{cf.} \cite[Theorem~4.9(b)]{rudin}). As the set $\{\ww{x}_n^\ast\}_{n=1}^\infty$ is total in $(X/N)^\ast$, the subspace $\oo\spn\{\ww{x}_n^\ast\}_{n=1}^\infty$ is norming, whence so is $\spn_\Q\{\ww{x}_n^\ast\}_{n=1}^\infty$. So, let $c>0$ be so that 
$$
\sup_{n\in \mathbb{N}}\abs{\langle \ww{z}_n^\ast, x+N\rangle }\geq c\n{x+N}_{X/N}\qquad(x\in X).
$$
For any $x\in X$ pick $y_x\in N$ for which $\n{x+y_x}\leq 2\n{x+N}_{X/N}$ and note that
\begin{equation*}\begin{aligned}
\n{\Lambda(x)}_{\ell_\infty}\,&=&\,\sup_{n\in \mathbb{N}}\abs{\langle z_n^\ast,x\rangle }\,\\
&=& \sup_{n\in \mathbb{N}}\abs{\langle \ww{z}_n^\ast, x+N\rangle }\,\\
&\geq& c\n{x+N}_{X/N}\,\\
&\geq& \frac{1}{2}\cdot c\n{x+y_x}.
\end{aligned}\end{equation*}
Therefore,
$$
\abs{\langle f, \Lambda(x)\rangle}=\abs{\langle y^\ast, x+y_x\rangle }\leq\n{y^\ast}\cdot\n{x+y_x}\leq\frac{2}{c}\n{y^\ast}\cdot\n{\Lambda(x)}_{\ell_\infty}
$$
which shows that $f$ is a continuous functional. So, let $F\in\ell_\infty^\ast\cong\ba(\PP\N)$ be any norm-preserving extension of $f$ and let $\nu\in\ba(\PP\N)$ be the measure corresponding to $F$. Then, for every $x\in X$ we have
$$
\langle y^\ast, x\rangle =\langle F, \Lambda(x)\rangle=\langle F, (z_n^\ast)_{n=1}^\infty\rangle=\int\limits_\N \langle z_n^\ast, x\rangle\,\nu(\dd n),
$$
which completes the proof. 
\end{proof}
\begin{remark}\label{rem_est}
Note that the above proof gives the upper estimate
\begin{equation}\label{mu_est}
\n{\mu}\leq\frac{2}{c}\n{y^\ast}
\end{equation}
for the variation norm of the measure $\mu$ representing $y^\ast$ via formula \eqref{mu}. Here, $c$ depends only on the given sequence $(x_n^\ast)_{n=1}^\infty$. Note also that Lemma~\ref{gc}, and especially inequality \eqref{mu_est}, is related to a result of Ostrovskii (\cite{ostrovskii}) saying that for any quasi-reflexive space $X$ and any absolutely convex set $A\subset X^\ast$ (in particular, any subspace) we have $$\oo{A}^{w^\ast}=\bigcup_{n=1}^\infty\oo{A\cap nB_{X^\ast}}^{w^\ast},$$ the set on the right-hand side being called the {\it weak$^\ast$ derived set of} $A$. Inequality \eqref{mu_est} gives a~uniform bound on the norms of combinations of $z_n^\ast$'s that approximate any given functional being a~generalised combination of $z_n^\ast$'s. (For our purposes it is more natural to use generalised combinations rather than the weak$^\ast$ closure.)
\end{remark}
\begin{proof}[Proof of Theorem \ref{gwiezdnyptak}]
We may assume that $X$ itself is reflexive in which case we have $\mathsf{d}(X)=\mathsf{d}^{w^\ast}(X^\ast)$, where $\mathsf{d}^{w^\ast}(X^\ast)$ is the minimal cardinality of a~weak$^\ast$-dense subset of $X^\ast$, thereby $(X^\ast, w^\ast)$ is non-separable. 

We shall construct a sequence $(x_\alpha)_{\alpha<\omega_1}\subset X$ of unit vectors such that $\n{x_\alpha-x_\beta}>1$ whenever $0\leq\beta<\alpha<\omega_1$. Choose any unit vector $x_0$ in $X$ and pick $x_0^\ast\in S_{X^\ast}$ with $\langle x_0^\ast, x_0\rangle=1$. Now, fix any $\alpha<\omega_1$ and assume that we have already chosen vectors $(x_\beta)_{0\leq\beta<\alpha}\subset S_{X}$ and functionals $(x_\beta^\ast)_{0\leq\beta<\alpha}\subset S_{X^\ast}$ such that\medskip
\begin{romanenumerate}
\item $\langle x_\beta^\ast,x_\beta\rangle=1$ for every $\beta<\alpha$;\smallskip
\item $\n{x_\beta-x_\gamma}>1$ for all $0\leq\beta<\gamma<\alpha$;\smallskip
\item $\bigcap_{\gamma\not=\beta}\mathrm{ker}(x_\gamma^\ast)\not\subseteq\mathrm{ker}(x_\beta^\ast)$ for every $\beta<\alpha$.
\end{romanenumerate}\medskip

We \emph{claim} that there exists a vector $y\in X$ satisfying $\langle x_\beta^\ast, y\rangle<0$ for each $\beta<\alpha$. Indeed, by (iii), for every $\beta<\alpha$ we may find a unit vector $$z_\beta\in\bigcap_{\gamma\not=\beta}\mathrm{ker}(x_\gamma^\ast)$$ such that $\langle x_\beta^\ast, z_\beta\rangle<0$. Take any summable sequence $(c_\beta)_{\beta<\alpha}$ of positive numbers. Then, $y=\sum_{\beta<\alpha}c_\beta z_\beta$ has the desired property.\medskip

Set $N=\ccap_{\beta<\alpha}\mathrm{ker}(x_\beta^\ast)$. Since $N^\perp=\overline{\mathrm{span}}^{w\ast}\{x_\beta^\ast\}_{\beta<\alpha}$ and $(X^\ast,w^\ast)$ is non-separable, there exists a~non-zero vector $x\in N$. Let $c>0$ be the constant occurring in inequality \eqref{mu_est} corresponding to (any enumeration of) ${\rm span}_{\mathbb{Q}}\{x_\beta^*\colon \beta<\alpha\}\cap B_{X^*}$. Choose $K>0$ so large that $$\n{y+Kx}>\frac{3}{c}\n{y}$$ and define $$x_\alpha=\frac{y+Kx}{\n{y+Kx}}.$$
Let also $x_\alpha^\ast$ be any norm-one functional satisfying $\langle x_\alpha^\ast,x_\alpha\rangle=1$.\medskip

In order to verify condition (ii) for $\alpha+1$ in the place of $\alpha$, observe that for each $\beta<\alpha$ we have
$$
\n{x_\alpha-x_\beta}\geq\abs{\langle x_\beta^\ast, x_\alpha-x_\beta\rangle }=1-\frac{\langle x_\beta^\ast, y\rangle}{\n{y+Kx}}>1,
$$
as $\langle x_\beta^\ast,y\rangle$ is negative.\medskip

Regarding condition (iii), we start by showing that $x_\alpha^\ast$ is not a~generalised combination of $x_\beta^\ast$'s ($\beta<\alpha$). If it were, then for some enumeration $z_1^\ast,z_2^\ast,\ldots$ of $\spn_\Q\{x_\beta^\ast\}_{\beta<\alpha}\cap B_{X^\ast}$ and some measure $\mu\in\ba(\PP\N)$ we would have
$$\langle x_\alpha^\ast,z\rangle=\int\limits_\N \langle z_n^\ast, z\rangle\,\mu(\dd n)\qquad(z\in X).$$
Moreover, in view of Remark~\ref{rem_est}, we may assume that $\n{\mu}\leq 2/c$. Set $z=y+Kx$. For each $\e>0$ we may then find $N\in\N$ and scalars $a_1,\ldots,a_N$ so that $$\sum_{j=1}^N\abs{a_j}\leq\n{\mu}\leq\frac{2}{c}\quad\text{and}\quad \left|\langle x_\alpha^\ast, y+Kx\rangle-\sum_{j=1}^Na_j\langle z_j^\ast, y+Kx\rangle\right|<\e.
$$
However, the left-hand side of the latter inequality equals to
\begin{equation*}
\left|\n{y+Kx}-\sum_{j=1}^Na_j\langle z_j^\ast, y\rangle \right|\geq\n{y+Kx}-\n{y}\sum_{j=1}^N\abs{a_j}>\left(\frac{3}{c}-\n{\mu}\right)\n{y}\geq\frac{\n{y}}{c},
\end{equation*}
hence we arrive at a contradiction in the case where $\e<\n{y}/c$. Now, Lemma~\ref{gc} implies that
$$
\bigcap_{\beta<\alpha}\mathrm{ker}(x_\beta^\ast)\not\subseteq\mathrm{ker}(x_\alpha^\ast).
$$Finally, suppose that for some $\beta_0<\alpha$ we have 
$$
\mathrm{ker}(x_\alpha^\ast)\cap\bigcap_{\begin{smallmatrix} \beta<\alpha\\ \beta\not=\beta_0  \end{smallmatrix} }\mathrm{ker}(x_\beta^\ast)\subseteq\mathrm{ker}(x_{\beta_0}^\ast).
$$Appealing once again to Lemma~\ref{gc}, we infer that for some enumeration $\{w_1^\ast,w_2^\ast,\ldots\}$ of the set $\mathrm{span}_\Q\big\{x_\alpha^\ast\cup\{x_\beta^\ast\colon \beta<\alpha,\, \beta\not=\beta_0\}\big\}\cap B_{X^*}$ and for some measure $\nu\in\ba(\PP\N)$ we have 
$$
\langle x_{\beta_0}^\ast,z\rangle=\int\limits_\N \langle w_n^\ast, z\rangle\,\nu(\dd n)\qquad(z\in X).
$$
For each $n\in\N$ let $\theta_n$ be the coefficient of $x_\alpha^\ast$ in the linear combination $w_n^\ast$. Putting $z=x$ in the above equality, and using the fact that $x$ belongs to the kernels of all $x_\beta^\ast$ ($\beta<\alpha$), we obtain $\int_\N\theta_n\,\nu(\dd n)=0$. On the other hand, for every $z\in X$ we have $$\langle x_{\beta_0}^\ast,z\rangle=\langle x_\alpha^\ast,z\rangle\int\limits_\N\theta_n\,\nu(\dd n)+\int\limits_\N\langle\widetilde{w_n^\ast},z\rangle\,\nu(\dd n),$$
where $\widetilde{w_n^\ast}$ is the linear combination resulting from $w_n^\ast$ by truncating $\theta_n x_\alpha^\ast$. Since the first integral vanishes, we see that the kernel of $x_{\beta_0}^\ast$ contains $\ccap_{\beta<\alpha, \beta\not=\beta_0}\mathrm{ker}(x_\beta^\ast)$ which is false due to the induction hypothesis. Therefore, condition (iii) for $\alpha+1$ instead of $\alpha$ has been fully verified and the proof is complete.
\end{proof}

\subsection{Super-reflexive spaces}

Now, we shall show that Theorem~\ref{gwiezdnyptak} may be strengthened for super-reflexive spaces. We require a piece of terminology before explaining the result.\medskip

Let $\Lambda$ be a set and let $(X_\alpha)_{\alpha\in \Lambda}$ be a family of Banach spaces. Given $p\in [1,\infty)$, the \emph{$\ell_p$-sum} of $(X_\alpha)_{\alpha\in \Lambda}$ is the Banach space $Z=( \bigoplus_{\alpha\in \Lambda} X_\alpha)_{\ell_p}$ that consists of all tuples $x=(x_\alpha)_{\alpha\in \Lambda}$ such that $x_\alpha\in X_\alpha$ ($\alpha\in \Lambda$) and 
$\|x\| = ( \sum_{\alpha\in\Lambda}\|x_\alpha\|^p )^{1/p}<\infty.$
Given $x=(x_\alpha)_{\alpha\in \Lambda}\in Z$, the \emph{support} of $x$ is the set $ {\rm supp}\,x = \{\alpha\in \Lambda\colon x_\alpha\neq 0\}.$\medskip

Let $X$ be a non-separable Banach space and set $\lambda = \mathsf d(X)$. A family $(P_\alpha)_{\omega\leqslant \alpha\leqslant\lambda}$ of linear projections on $X$ is called \emph{a~projectional resultion of the identity in} $X$ (a \emph{PRI}, for short) if
\begin{romanenumerate}
\item $\|P_\alpha\|=1$ for all $\omega<\alpha\leqslant \lambda$,
\item $P_\omega=0$ and $P_\lambda = I_X$, the identity map on $X$,
\item $\mathsf d(P_{\alpha}(X))\leqslant |\alpha|$ for each $\alpha<\lambda$,
\item\label{lorded} $P_\alpha P_\beta = P_\beta P_\alpha = P_{\min \{\alpha, \beta\}}$ for all $\omega\leqslant \alpha<\beta\leqslant \lambda$,
\item \label{subspacespri}for every $x\in X$ the map $\alpha\mapsto P_\alpha x$ is continuous in the order-norm topology.\end{romanenumerate}\medskip

Condition \eqref{lorded} implies that the ranges of $P_\alpha$ ($\omega\leqslant \alpha\leqslant\lambda$) are well-ordered by inclusion. When talking about any PRI, we shall always assume without loss of generality that for different $\alpha,\beta$ the ranges of $P_\alpha$ and $P_\beta$ are different. We want also to emphasise that it follows from condition (v) that if $W\subset X$ is a~subspace with $\mathsf{d}(W)<\lambda$ then for some $\alpha<\lambda$ we have $W\subseteq P_\alpha(X)$. We refer to \cite[Chapter 13.2]{hajeketal} for more information concerning projectional resolutions of the identity.
\medskip

Let $X$ be a non-separable, reflexive space. By a result of Lindenstrauss (\cite{lin}), there exists a~PRI for $X$, say $(P_\alpha)_{\omega\leqslant \alpha\leqslant\lambda}$. Our idea relies on the observation due to Benyamini and Starbird (\cite[p.~139]{benyaministarbird}) who, building on work of James (\cite{james}), observed that if $X$ is super-reflexive, then for any $\e>0$ there exists $p\in (1,\infty)$ such that the operator

\begin{equation}\label{operator}T\colon X \;\longrightarrow\;\; \Biggr( \bigoplus_{\omega\leqslant \alpha<\lambda} (P_{\alpha+1}-P_\alpha)(X) \Biggr)_{\!\!\ell_p},\qquad Tx = \bigl(P_{\alpha+1}x - P_{\alpha}x\bigr)_{\omega\leqslant \alpha<\lambda}\qquad (x\in X)\end{equation}
has norm at most $2+\e$. This follows from James' theorem (see \cite[Theorem~4]{james}) which says that for each super-reflexive Banach space $X$ and any constants $0<c<1/(2K)$, $C>1$ there are exponents $1<q<p<\infty$ such that for every normalised basic sequence $(e_n)_{n=1}^\infty\subset X$ with basis constant $K$, and any scalars $(a_n)_{n=1}^N$, we have \begin{equation}\label{superjames}
c\cdot\!\left(\sum_{n=1}^N\abs{a_n}^p\right)^{\!\!\! 1/p}\leq\left\|\sum_{n=1}^Na_ne_n\right\| \leqslant C\!\cdot\!\left(\sum_{n=1}^N\abs{a_n}^q\right)^{\!\!\! 1/q}.
\end{equation}
Moreover, if $(e_n)_{n=1}^\infty$ is monotone, then $c$ may be taken to be arbitrarily close to $\frac{1}{2}$. Once $c<\frac{1}{2}$ is chosen (together with the corresponding value of $p$), the norm of $T$ equals at most $\frac{1}{c}$; moreover, $c$ may be chosen as close to $\frac{1}{2}$ as we wish, since any sequence of unit vectors picked from different blocks of the form $(P_{\beta+1}-P_\beta)(X)$ forms a~monotone basic sequence.
\begin{lemma}\label{gamma_min}
Let $X$ be super-reflexive space such that $\mathsf d(X)$ has uncountable cofinality and let $T$ be given by \eqref{operator}. Then there exists a~positive constant $\gamma_{\mathrm{min}}$ such that  for each closed subspace $Y\subseteq X$ with $\mathsf{d}(X/Y)<\mathsf{d}(X)$ we have $\|T|_Y\|\geqslant \gamma_{\mathrm{min}}$. 
\end{lemma}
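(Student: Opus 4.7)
The plan is to take $\gamma_{\mathrm{min}}=1$ and show that every admissible $Y$ contains an entire PRI block $(P_{\alpha+1}-P_\alpha)(X)$, on which $T$ happens to act as an isometry. My strategy is to dualise: the density bound on the quotient $X/Y$ translates, via reflexivity, into a bound on the density of $Y^\perp\subseteq X^\ast$, which forces $Y^\perp$ to lie inside some $P_\alpha^\ast(X^\ast)$ and hence $Y$ to contain the whole kernel $\mathrm{ker}(P_\alpha)$.

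Writing $\lambda:=\mathsf{d}(X)$, I first observe that, since $X$ is (super-)reflexive, so is $X^\ast$, with $\mathsf{d}(X^\ast)=\mathsf{d}(X)=\lambda$; moreover, the adjoint system $(P_\alpha^\ast)_{\omega\leqslant\alpha\leqslant\lambda}$ is a~PRI in $X^\ast$, which is a~routine verification of axioms (i)--(v) using reflexivity at the density axiom (iii) and for the norm continuity (v). Because $X/Y$ is reflexive as a~quotient of a~reflexive space and $Y^\perp$ is isometrically isomorphic to $(X/Y)^\ast$, we have $\mathsf{d}(Y^\perp)=\mathsf{d}(X/Y)<\lambda$. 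Applying the fact about PRIs recalled just before Lemma~\ref{gamma_min} to the adjoint PRI $(P_\alpha^\ast)$ in $X^\ast$ yields some $\alpha<\lambda$ with $Y^\perp\subseteq P_\alpha^\ast(X^\ast)$; this is where the uncountable cofinality of $\lambda$ is implicitly used. Pre-annihilating and using $\langle x, P_\alpha^\ast f\rangle=\langle P_\alpha x, f\rangle$ produces
$$Y = {}^\perp(Y^\perp) \supseteq {}^\perp(P_\alpha^\ast(X^\ast)) = \mathrm{ker}(P_\alpha).$$

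To finish, choose any unit vector $z \in (P_{\alpha+1}-P_\alpha)(X) \subseteq \mathrm{ker}(P_\alpha) \subseteq Y$; this block is non-zero by the standing convention that consecutive ranges of the PRI differ. A direct computation using $P_\beta P_\gamma = P_{\min(\beta,\gamma)}$ gives $(P_{\beta+1}-P_\beta)z = z$ when $\beta=\alpha$ and $0$ otherwise, so $Tz$ has a single non-zero coordinate and $\|Tz\|=\|z\|=1$ in the $\ell_p$-sum, whence $\|T|_Y\|\geqslant 1$. The main obstacle is the dualisation step --- verifying that $(P_\alpha^\ast)$ is genuinely a~PRI in $X^\ast$ and that the density of the dual of the reflexive quotient $X/Y$ matches that of $X/Y$; both rest on reflexivity, and once they are in place the rest is a~short annihilator computation.
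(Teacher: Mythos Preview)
Your argument takes a genuinely different route from the paper's and, in the case where $\lambda:=\mathsf{d}(X)$ is \emph{regular}, it even yields the sharper value $\gamma_{\mathrm{min}}=1$. The paper instead argues by contradiction in four lines: if $\n{T\vert_{Y_n}}\to 0$ for admissible subspaces $Y_n$, then $Y:=\bigcap_n Y_n$ still satisfies $\mathsf{d}(X/Y)\leq\sup_n\mathsf{d}(X/Y_n)<\lambda$ (this is where uncountable cofinality enters), so $Y\neq\{0\}$, yet $T\vert_Y=0$, contradicting injectivity of $T$. Your structural, dualisation-based approach is more informative, since it locates an entire block of the PRI inside $Y$ and shows $T$ acts isometrically there.

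There is, however, a gap in the singular case. The PRI fact you invoke --- that any subspace $W$ with $\mathsf{d}(W)<\lambda$ sits inside some $P_\alpha(X)$ with $\alpha<\lambda$ --- actually needs $\mathsf{d}(W)<\mathrm{cf}(\lambda)$: one finds for each $w$ in a dense set an $\alpha(w)<\lambda$ with $P_{\alpha(w)}w=w$, and then must take a supremum over $\mathsf{d}(W)$ many such ordinals. (The paper's remark preceding the lemma is a little loose on this point.) Concretely, take $X=\ell_2(\aleph_{\omega_1})$ with the canonical PRI; here $\mathrm{cf}(\lambda)=\omega_1$, and the closed span of $\{e_{\omega_\gamma}:\gamma<\omega_1\}$ has density $\omega_1<\aleph_{\omega_1}$ yet is contained in no $P_\alpha(X)$ with $\alpha<\aleph_{\omega_1}$. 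Thus your step $Y^\perp\subseteq P_\alpha^\ast(X^\ast)$ can fail when $\lambda$ is singular with uncountable cofinality. This does not harm the application to Theorem~\ref{czupakabra}, where $\lambda$ is explicitly taken regular, but the lemma as stated assumes only uncountable cofinality, and for that generality the paper's short contradiction argument is what is required.
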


\begin{proof}
Assume, on the contrary, that there is a~sequence of closed subspaces $Y_n\subset X$ ($n\in\N$) so that $\mathsf{d}(X/Y_n)<\mathsf{d}(X)$ and $\n{T\vert_{Y_n}}\to 0$ as $n\to \infty$. Let $Y = \bigcap_{n=1}^\infty Y_n$. We have, $$\mathsf{d}(X / Y) \leqslant \sup_{n\in \mathbb{N}} \mathsf{d}(X/Y_n) < \mathsf{d}(X)$$
because $\mathsf{d}(X)$ has uncountable cofinality. In particular, $Y$ is non-zero, however $\|T\vert_Y\|=0$. This is, of course, impossible as $T$ is one-to-one.
\end{proof}

\begin{theorem}\label{czupakabra}
For each super-reflexive Banach space $X$ we have $\mathsf{eo}(X)=\mathsf{d}(X)$.
\end{theorem}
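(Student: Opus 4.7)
The inequality $\mathsf{eo}(X)\leq\mathsf{d}(X)$ is part of the chain of trivial bounds recorded in Section~\ref{prelim}, so the content of the theorem is the reverse inequality $\mathsf{eo}(X)\geq\mathsf{d}(X)$. Set $\lambda:=\mathsf{d}(X)$. First I would dispense with the case $\mathrm{cf}(\lambda)=\omega$ by passage to subspaces: choose a strictly increasing sequence $(\kappa_n)_{n\in\N}$ of regular uncountable cardinals with $\sup_n\kappa_n=\lambda$ and, for each $n$, a closed subspace $Y_n\subseteq X$ of density~$\kappa_n$. Since $Y_n$ is still super-reflexive with $\mathrm{cf}(\mathsf{d}(Y_n))=\kappa_n>\omega$, the main case (below) applied to~$Y_n$ produces a $(1+\varepsilon_n)$-separated subset of $S_{Y_n}\subseteq S_X$ of cardinality~$\kappa_n$, whence $\mathsf{eo}(X)\geq\sup_n\kappa_n=\lambda$.

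For the main case $\mathrm{cf}(\lambda)>\omega$, fix a PRI $(P_\mu)_{\omega\leq\mu\leq\lambda}$ on~$X$ and the operator~$T$ of~\eqref{operator} associated to a constant $c<\tfrac{1}{2}$ taken arbitrarily close to~$\tfrac{1}{2}$, so $\|T\|\leq 1/c$. Lemma~\ref{gamma_min} supplies $\gamma_{\min}>0$ with $\|T|_Y\|\geq\gamma_{\min}$ whenever $\mathsf{d}(X/Y)<\lambda$. The plan is to build by transfinite recursion unit vectors $(x_\alpha)_{\alpha<\lambda}\subset S_X$ and strictly increasing ordinals $(\delta_\alpha)_{\alpha<\lambda}\subset\lambda$ with $x_\alpha\in(P_{\delta_{\alpha+1}}-P_{\delta_\alpha})(X)$. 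At stage~$\alpha$, the density of $V_\alpha:=\overline{\mathrm{span}}\{x_\beta:\beta<\alpha\}$ is at most $|\alpha|+\omega<\lambda$, so by the remark after property~\romanref{subspacespri} of the PRI (and $\mathrm{cf}(\lambda)>\omega$ for the supremum) there is $\delta_\alpha<\lambda$ with $V_\alpha\subseteq P_{\delta_\alpha}(X)$ and $\delta_\alpha\geq\sup_{\beta<\alpha}\delta_{\beta+1}$. Applying Lemma~\ref{gamma_min} to $Y_\alpha:=\ker P_{\delta_\alpha}$, I pick $z_\alpha\in S_{Y_\alpha}$ with $\|Tz_\alpha\|_{\ell_p}$ nearly realising~$\|T|_{Y_\alpha}\|$, and by PRI continuity choose $\eta_\alpha<\lambda$ so large that $w_\alpha:=P_{\eta_\alpha}z_\alpha\in(P_{\eta_\alpha}-P_{\delta_\alpha})(X)$ approximates~$z_\alpha$ both in norm and in $\|T\cdot\|_{\ell_p}$. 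Then $x_\alpha:=w_\alpha/\|w_\alpha\|$ and $\delta_{\alpha+1}:=\eta_\alpha$ finish the stage.

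Since the PRI-blocks containing the $x_\alpha$'s are pairwise disjoint, the images $Tx_\alpha$ have pairwise disjoint supports in the $\ell_p$-sum, and for distinct $\alpha,\beta$,
\[
\|x_\alpha-x_\beta\|\,\geq\,\frac{\|T(x_\alpha-x_\beta)\|_{\ell_p}}{\|T\|}\,=\,\frac{\bigl(\|Tx_\alpha\|_{\ell_p}^{p}+\|Tx_\beta\|_{\ell_p}^{p}\bigr)^{1/p}}{\|T\|}\,\geq\,\frac{2^{1/p}\,\min\{\|Tx_\alpha\|_{\ell_p},\|Tx_\beta\|_{\ell_p}\}}{\|T\|}.
\]
The hard part is driving this ratio strictly above~$1$ by a uniform margin. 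The crude bound $\|Tx_\alpha\|_{\ell_p}\geq\gamma_{\min}/2$ coming from Lemma~\ref{gamma_min} alone yields only $c\gamma_{\min}2^{1/p}$, which is below~$1$ for $c<\tfrac{1}{2}$. The key saving---and the point where super-reflexivity must be used beyond the qualitative Lemma~\ref{gamma_min}---is that every unit vector in a single PRI-block $(P_{\mu+1}-P_\mu)(X)$ with $\mu\geq\delta_\alpha$ lies in $Y_\alpha$ with $\|T\cdot\|_{\ell_p}=1$, forcing $\|T|_{Y_\alpha}\|\geq 1$, and a further refinement (pushing any $T$-near-extremiser of~$X$ into~$\ker P_{\delta_\alpha}$ at arbitrarily small loss by discarding its low-frequency projection) shows that $\|T|_{Y_\alpha}\|$ is essentially~$\|T\|$. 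Choosing $z_\alpha$ to nearly norm $T|_{Y_\alpha}=\|T\|$ then makes $\|Tx_\alpha\|_{\ell_p}/\|T\|$ arbitrarily close to~$1$, so that $2^{1/p}>1$ delivers the required $(1+\varepsilon)$-separation with $\varepsilon$ depending only on~$p$ (coming from the application of James's theorem).
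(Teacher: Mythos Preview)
Your construction of disjointly supported $Tx_\alpha$'s by placing each $x_\alpha$ inside a PRI-chunk $(P_{\delta_{\alpha+1}}-P_{\delta_\alpha})(X)$ is clean and in fact avoids the contradiction argument (via the upper $\ell_q$-estimate in James' theorem) that the paper uses to manufacture disjoint supports. The reduction of the singular-cofinality case is also fine.

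The gap is in the ``further refinement''. You correctly observe that any unit vector sitting in a single block has $T$-image of norm~$1$, so $\|T|_{Y_\alpha}\|\geq 1$; but this only yields
\[
\|x_\alpha-x_\beta\|\ \geq\ \frac{2^{1/p}}{\|T\|}\ \leq\ c\,2^{1/p}\ <\ 2^{1/p-1}\ <\ 1,
\]
which is useless. To get past this you assert that $\|T|_{Y_\alpha}\|$ is ``essentially $\|T\|$'' because any near-extremiser of $T$ can be pushed into $\ker P_{\delta_\alpha}$ ``at arbitrarily small loss by discarding its low-frequency projection''. This step is not justified and is, as stated, false: a unit vector $x$ that nearly norms $T$ may lie entirely in $P_{\delta_\alpha}(X)$, in which case $(I-P_{\delta_\alpha})x=0$ and there is nothing to push. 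Nothing in the PRI structure forces the blocks, or the way they are glued together in $X$, to be homogeneous along the index set, so the ratio $\|Tx\|/\|x\|$ attained on $\ker P_{\delta_\alpha}$ can be genuinely smaller than $\|T\|$---and, crucially, smaller than $2^{-1/p}\|T\|$.

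The paper resolves exactly this difficulty by turning the inequality around. Rather than trying to raise the numerator $\|Tx_\alpha\|$ towards $\|T\|$, it lowers the denominator: if no subspace $Y$ with $\mathsf d(X/Y)<\lambda$ satisfies $\|T|_Y\|>2^{-1/p}\|T\|$ uniformly from below, then one can find such a $Y$ with $\|T|_Y\|$ strictly smaller than $\|T\|$ by a fixed factor, and one repeats the dichotomy on $Y$. Iterating produces subspaces on which the norm of $T$ tends to~$0$, while Lemma~\ref{gamma_min} keeps the lower bound at $\gamma_{\min}>0$; the process must therefore terminate with a subspace on which the ratio exceeds $2^{-1/p}$, and your disjoint-support construction (or the paper's variant) then finishes the job there. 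This descending-subspace iteration is the missing idea in your argument.
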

\begin{proof}
The separable case follows from the Elton--Odell theorem, so let us suppose that $X$ is non-separable. Since every singular cardinal number is a limit of an increasing transfinite sequence of regular cardinals, without loss of generality we may suppose that $\lambda=\mathsf{d}(X)$ itself is regular. In particular, it has uncountable cofinality.\medskip

Fix any positive number $c<\frac{1}{2}$ and let $p\in (1,\infty)$ be so that the first inequality in \eqref{superjames} holds true. Keeping the above notation, let us say that the operator $T$ is {\it bounded by a pair} $(\gamma,\delta)$ if $\n{T}\leq\delta$ and $\gamma\leq\n{T\vert_Y}$ for every subspace $Y\subseteq X$ with $\mathsf{d}(X/Y)<\lambda$.\medskip

First, we {\it claim} that our assertion will follow whenever we show that there exists a~pair $(\gamma,\delta)$ with $\gamma/\delta>2^{-1/p}$ and such that $T$ is bounded by $(\gamma,\delta)$. To see this choose a unit vector $x_0\in X$ such that $\|Tx_0\|\geqslant \gamma$. Let $\beta<\lambda$ and suppose that we have already chosen unit vectors $x_\alpha\in X$ ($\alpha<\beta$) such that $\|Tx_\alpha\|\geqslant \gamma$ and the supports of $Tx_\alpha$ ($\alpha<\beta$) are pairwise disjoint. We are now in a~position to choose a~unit vector $x_\beta\in X$ such that $\|Tx_\beta\|\geqslant \gamma$ and the support of $Tx_\beta$ is disjoint from the supports of $Tx_\alpha$ for all $\alpha<\beta$. \medskip

Indeed, if this were impossible, we could build a transfinite, linearly independent sequence of unit vectors $(w_\xi)_{\xi<\lambda}$ in $X$ with the following properties:\medskip
\begin{romanenumerate}\item\label{pair1} $\|Tw_\xi\|\geqslant \gamma$ ($\xi<\lambda$);
\item for each $\xi<\lambda$ there exists $\eta<\beta$ such that ${\rm supp}\,Tw_\xi \cap {\rm supp}\,Tx_\eta\neq \varnothing$.
\end{romanenumerate}\medskip
The possibility of choosing a transfinite sequence $(w_\xi)_{\xi<\lambda}$ that satisfies \eqref{pair1} follows from the fact that $T$ is bounded by $(\gamma,\delta)$.\medskip

For each $\omega\leqslant \alpha<\lambda$ the space $(P_{\alpha+1}-P_\alpha)(X)$, as a subspace of $X$, is reflexive and has density at most equal to the cardinality of ${\alpha}$, hence so does its dual space. Therefore, as $\lambda>|\beta|$, there must exist:\medskip
\begin{romanenumerate}
\item $\alpha_0\in\bigcup_{\xi<\beta}\mathrm{supp}\,Tx_\xi$,\smallskip
\item a norm-one functional $x^\ast\in (P_{\alpha_0+1}-P_{\alpha_0})(X)^\ast$,\smallskip
\item a~subsequence $(w_{\xi_n})_{n=1}^\infty$ of $(w_\xi)_{\xi<\lambda}$, and\smallskip
\item a positive constant $d$
\end{romanenumerate}\medskip
such that $|\langle x^\ast, z_{\xi_n}\rangle|\geqslant d$ for all $n\in \mathbb{N}$, where $z_{\xi_n} = (P_{\alpha_0+1} - P_{\alpha_0})(w_{\xi_n})$ ($n\in \mathbb{N}$). Passing to a further subsequence and replacing $x^*$ with $-x^*$ if necessary, we may suppose that for all $n\in \mathbb{N}$ we have $\langle x^*, z_{\xi_n}\rangle \geqslant d$.\medskip

Note that $$\|Tw_{\xi_1}+\ldots + Tw_{\xi_n}\|\geqslant \langle x^*, z_{\xi_1}+\ldots + z_{\xi_n}\rangle \geqslant nd\qquad (n\in \mathbb{N}).$$
By James' inequality \eqref{superjames},
$$\n{w_{\xi_1}+\ldots + w_{\xi_n}} \leqslant C\cdot n^{1/q}\qquad (n\in \mathbb{N})$$
for some $C>0$ and $q\in (1,\infty)$ independent of $n$. Setting $$y_n= \frac{w_{\xi_1}+\ldots + w_{\xi_n}}{\n{w_{\xi_1}+\ldots + w_{\xi_n}}}\qquad(n\in\mathbb{N}),$$ we conclude that $\|Ty_n\|\geqslant \frac{dn}{Cn^{1/q}}\to \infty$ as $n\to\infty$; a~contradiction.\medskip

Now, for distinct $x_\alpha, x_\beta$ ($\alpha,\beta<\lambda$) we have
$$\|x_\alpha - x_\beta\| \geqslant\frac{1}{\delta} \|Tx_\alpha - Tx_\beta\| = \frac{1}{\delta}\bigl(\|Tx_\alpha\|^p + \|Tx_\beta\|^p\bigr)^{1/p} \geqslant \frac{\gamma}{\delta}\cdot 2^{1/p}>1.$$
This shows that our assertion is true whenever $T$ is bounded by a pair $(\gamma, \delta)$ satisfying $\gamma/\delta>2^{-1/p}$.\medskip

Set $\delta_0=\frac{1}{c}$. Since $\n{T}\leq \delta_0$, $T$ is bounded by a pair $(\gamma_0,\delta_0)$, where $$\gamma_0=\sup\bigl\{\gamma\colon \n{T\vert_Y}\geqslant\gamma\mbox{ for every subspace }Y\subseteq X\mbox{ with }\mathsf{d}(X/Y)<\lambda\bigr\}.$$
If $\gamma_0>2^{-1/p}\delta_0$, then we are done by the first part of the proof. Assume the opposite, that is, the supremum at the right-hand side is at most $2^{-1/p}\delta_0$. Fix any sequence $(\eta_n)_{n=0}^\infty$ of real numbers strictly larger than $1$ and such that $\prod_{n=0}^\infty\eta_n$ converges. Since 
$$
\gamma_0<2^{-1/p}\eta_0\delta_0=:\delta_1,
$$
there exists a subspace $X_1\subset X$ with $\mathsf{d}(X/X_1)<\lambda$ and such that $\n{T\vert_{X_1}}\leq\delta_1$. Now, define $\gamma_1$ analogously as $\gamma_0$ replacing $X$ by $X_1$. Applying again the first part of the proof to $T\vert_{X_1}$ we know that the proof is accomplished whenever $\gamma_1>2^{-1/p}\delta_1$. If this is not the case, we continue our process. At the $n^{{\rm th}}$ step we have $\delta_n=2^{-1/p}\eta_{n-1}\delta_{n-1}$, whence
$$
\delta_n=2^{-n/p}\delta_0\cdot\prod_{j=0}^{n-1}\eta_j\xrightarrow[n\to\infty]{}0.
$$
On the other hand, Lemma \ref{gamma_min} gives $\gamma_n\geq\gamma_{\mathrm{min}}$ for each $n\in\N$. Hence, if our process did not terminate, we would have arrived at the absurdity that for each $n\in \mathbb{N}$ $$\gamma_{\min}\leqslant \gamma_n \leqslant 2^{-\tfrac{1}{p}}\cdot \delta_n \leqslant \delta_n,$$ which completes the proof.
\end{proof}

\begin{remark}One cannot extend the above technique to the class of all reflexive spaces. Indeed, H\'ajek (\cite{hajek}) constructed a non-separable Tsirelson-like reflexive space $X$ whose no non-separable subspace admits an injective, bounded linear map into $\ell_p(\lambda)$ for some uncountable cardinal number $\lambda$. Nonetheless, it is easily verifiable that $\mathsf d(X) = \mathsf{eo}(X)=\omega_1$.\end{remark}

\begin{remark}The supremum appearing in the definition of $\mathsf{eo}(X)$ need not be attained, even in the case where $X$ is reflexive. Indeed, let $(p_n)_{n=1}^\infty$ be a sequence of real numbers with $p_1>1$ that increase to $\infty$ as $n\to \infty$. Consider the Banach space
$$X = \Big( \bigoplus_{n\in \mathbb{N}} \ell_{p_n}(\omega_n)\Big)_{\ell_2}.$$
Then $X$ is reflexive, as an $\ell_2$-sum of reflexive spaces, and $\mathsf{d}(X)=\omega_\omega$. For each $p\in [1,\infty)$, infinite subsets of the unit sphere of $\ell_p$ are separated by at most $2^{1/p}$ (\cite[Theorem 16.9]{wells}) and since $2^{1/p}\to 1$ as $p\to \infty$, we conclude that the unit sphere of $X$ does not contain $(1+\varepsilon)$-separated subsets of cardinality $\omega_\omega$ ($\varepsilon>0$).\medskip 

The space $X$ is not super-reflexive, though. It would interesting to find out what happens in the super-reflexive case. \end{remark}
\subsection{Banach spaces which are duals of WLD spaces of large density}
The last result of this chapter is devoted to Banach spaces that are duals of WLD spaces of density bigger than $\mathfrak{c}$. Here, by a~{\it weakly Lindel\"of determined} Banach space (\emph{WLD}, for short) we understand a Banach space $X$ such that for some set $\Gamma$ the dual ball $(B_{X^\ast},w^\ast)$ is homeomorphic to a~subset of $$\{x\in \mathbb{R}^\Gamma\colon \text{the set }\{\gamma\in \Gamma\colon x(\gamma)\neq 0\}\text{ is countable}\},$$ that is compact in the topology of point-wise convergence. This is a~rich class of Banach spaces; for instance, every weakly compactly generated space is WLD (for more information consult the monograph \cite{hajek_book}). \medskip

Following \cite{kubis}, by a~$1$-{\it projectional skeleton} in $X$ we  understand any family $\mathfrak{s}=\{R_s\}_{s\in\Gamma}$ of norm-one projections on $X$, indexed by a~directed partially ordered set $\Gamma$ such that:
\begin{itemize*}
\item[(a)] $X=\bigcup_{s\in\Gamma}R_s(X)$, where each subspace $R_s(X)$ ($s\in \Gamma$) is separable;
\item[(b)] if $s\leq t$ ($s,t\in \Gamma$), then $R_s=R_sR_t=R_tR_s$;
\item[(c)] if $s_1\leq s_2\leq\ldots$ is a sequence in $\Gamma$, then $t=\sup_{n\in \mathbb{N}}s_n$ exists in $\Gamma$ and $$R_t(X)=\oo{\bigcup_{n=1}^\infty R_{s_n}(X)}.$$
\end{itemize*}
Then, for every directed subset $T\subseteq\Gamma$ the pointwise limit $R_Tx:=\lim_{s\in T}R_sx$ ($x\in X$) exists and defines a~projection $R_T$ onto the closure of $\bigcup_{s\in T}R_s(X)$ ({\it cf. }\cite[Lemma~11]{kubis}).\medskip

Set $D(\mathfrak{s})=\bigcup_{s\in\Gamma}R_s^\ast(X^\ast)$. Then $D(\mathfrak{s})$ is a~$1$-norming subspace of $X^\ast$. According to \cite[Theorem~15]{kubis}, whenever we have a~$1$-norming set $D\subseteq X^\ast$ which generates projections, there exists a~$1$-projectional skeleton $\mathfrak{s}$ in $X$ with $D\subseteq D(\mathfrak{s})$. Secondly, for every WLD space $X$ its dual $X^\ast$ generates projections, therefore any such space $X$ admits a~$1$-projectional skeleton $\mathfrak{s}$ for which $D(\mathfrak{s})=X^\ast$ (for more details, see \cite{kubis} or \cite[Ch.~17]{kakol}). Our next proof shows, in a~sense, how far one can go with the original argument of Kottman. We shall also employ a~technique, due to Kubi\'s ({\it cf. }~\cite[Theorem~12]{kubis}), of building PRI's out of projectional skeletons.

\begin{theorem}\label{long_PRI}
Let $X$ be a WLD Banach space with $\mathsf{d}(X)>\mathfrak{c}$. Then $\mathsf{k}(X^\ast)$ is uncountable. 
\end{theorem}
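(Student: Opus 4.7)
The plan is to execute transfinitely, up to $\omega_1$, the Kottman--Starbird construction in its dual form, producing unit functionals $\{x_\alpha^\ast\}_{\alpha<\omega_1}\subset S_{X^\ast}$ together with witnesses $\{v_\alpha\}_{\alpha<\omega_1}\subset S_X$ for which $\langle x_\alpha^\ast, v_\alpha\rangle=1$ and $\langle x_\beta^\ast, v_\alpha\rangle<0$ whenever $\beta<\alpha$. Evaluating $x_\alpha^\ast-x_\beta^\ast$ at $v_\alpha$ then immediately yields
\[
\|x_\alpha^\ast-x_\beta^\ast\|\geq\langle x_\alpha^\ast, v_\alpha\rangle-\langle x_\beta^\ast, v_\alpha\rangle>1,
\]
the desired strict $(1+)$-separation.

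The scaffolding will come from the $1$-projectional skeleton $\mathfrak{s}=\{R_s\}_{s\in\Gamma}$ with $D(\mathfrak{s})=X^\ast$ that every WLD space admits. Alongside each pair $(x_\alpha^\ast, v_\alpha)$ I track an index $s_\alpha\in\Gamma$ with $x_\alpha^\ast\in R_{s_\alpha}^\ast(X^\ast)$ and $(s_\alpha)_{\alpha<\omega_1}$ strictly increasing. Property~(c) of the skeleton ensures that at every countable stage $\alpha$ the supremum $t_\alpha=\sup_{\beta<\alpha}s_\beta$ exists in $\Gamma$; the subspace $R_{t_\alpha}(X)=\overline{\bigcup_{\beta<\alpha}R_{s_\beta}(X)}$ is separable, and property~(b) implies every earlier $x_\beta^\ast$ factors through $R_{t_\alpha}$. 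The hypothesis $\mathsf{d}(X)>\mathfrak{c}\geq\omega_1$ guarantees that the skeleton is nowhere near exhausted, and directedness of $\Gamma$ supplies an $s_\alpha\in\Gamma$ strictly above $t_\alpha$. The difference $Q_\alpha:=R_{s_\alpha}-R_{t_\alpha}$ is then a projection whose range is separable, and is annihilated by every $x_\beta^\ast$ with $\beta<\alpha$.

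The inductive step itself mirrors the proof of Theorem~\ref{gwiezdnyptak}. Maintaining as an invariant that none of the previously chosen $x_\beta^\ast$ is a generalised combination of the others, I pick for each $\beta<\alpha$ a vector $z_\beta\in R_{t_\alpha}(X)\cap\bigcap_{\gamma<\alpha,\,\gamma\neq\beta}\mathrm{ker}(x_\gamma^\ast)$ with $\langle x_\beta^\ast, z_\beta\rangle<0$, combine them into a summable sum $y=\sum_{\beta<\alpha}c_\beta z_\beta$ with $c_\beta>0$ so that $\langle x_\beta^\ast, y\rangle<0$ for every $\beta<\alpha$, select any non-zero $w\in Q_\alpha(X)$, and set $v_\alpha=(y+Kw)/\|y+Kw\|$ for a sufficiently large scalar $K$. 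Since $w\in Q_\alpha(X)$ is killed by each $x_\beta^\ast$, the strict inequalities $\langle x_\beta^\ast, v_\alpha\rangle<0$ survive normalisation. Finally I take $x_\alpha^\ast\in S_{X^\ast}\cap R_{s_\alpha}^\ast(X^\ast)$ to be a norming functional of $v_\alpha$ on $R_{s_\alpha}(X)$ lifted to $X^\ast$ via $R_{s_\alpha}^\ast$; the lift preserves both norm and pairing with $v_\alpha$ because $R_{s_\alpha}$ is a norm-one projection fixing $v_\alpha$.

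The main obstacle is the propagation of the ``no generalised combination'' invariant through $\omega_1$ stages---exactly the analogue of condition (iii) in Theorem~\ref{gwiezdnyptak}. I expect to establish a WLD counterpart of Lemma~\ref{gc} in which the role played by quasi-reflexivity is replaced by the fact that each $R_{t_\alpha}(X)$ is separable (so that $R_{t_\alpha}(X)^\ast$ has density at most $\mathfrak{c}$) and the chain $(s_\alpha)$ stays strictly increasing thanks to $\mathsf{d}(X)>\mathfrak{c}$. Calibrating $K$ large enough, guided by the variation-norm bound of Remark~\ref{rem_est}, should then rule out $x_\alpha^\ast$ being a generalised combination of its predecessors, closing the induction and producing the uncountable $(1+)$-separated subset of $S_{X^\ast}$.
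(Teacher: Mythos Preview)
Your outline has a genuine gap precisely where you locate ``the main obstacle.'' Lemma~\ref{gc} is not an incidental technicality of the quasi-reflexive proof; it encodes the Davis--Lindenstrauss characterisation that every total subspace of $X^\ast$ is norming, a property \emph{equivalent} to quasi-reflexivity. Replacing this with the separability of $R_{t_\alpha}(X)$ does not help: a separable space need not be quasi-reflexive, and for a non-quasi-reflexive space the weak$^\ast$ closure of $\spn\{x_\beta^\ast\}_{\beta<\alpha}$ can properly contain the set of generalised combinations with bounded variation norm (this is exactly the phenomenon discussed in Remark~\ref{rem_est}). Consequently the implication ``$\bigcap_{\gamma\neq\beta_0}\ker(x_\gamma^\ast)\subseteq\ker(x_{\beta_0}^\ast)\Rightarrow x_{\beta_0}^\ast$ is a generalised combination with controlled $\n{\mu}$'' is unavailable, and without it you cannot verify the second half of condition~(iii) after adjoining $x_\alpha^\ast$. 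A further symptom that something is wrong: your argument never makes essential use of $\mathsf d(X)>\mathfrak{c}$. The chain $(s_\alpha)_{\alpha<\omega_1}$ stays strictly increasing already when $\mathsf d(X)>\omega$, and the bound $\mathsf d(R_{t_\alpha}(X)^\ast)\leq\mathfrak{c}$ is automatic for any separable range---so if your scheme closed, it would prove the stronger statement for every non-separable WLD space, which is not what the hypothesis suggests.

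The paper proceeds quite differently. Using the skeleton it first manufactures a length-$\mathfrak{c}^+$ Auerbach system $\{(x_\alpha^\ast,\w{x_\alpha})\}\subset X^\ast\times X$ with the $\w{x_\alpha}$ weak$^\ast$-continuous. It then argues by contradiction, encoding unit functionals as countably supported sequences in $\R^{[\omega,\mathfrak{c}^+)}$ and running a~transfinite version of Kottman's original combinatorial proof: one defines an ``extends'' relation, shows every chain is countable, extracts maximal elements via weak$^\ast$ cluster points (here the weak$^\ast$ continuity of the $\w{x_\alpha}$ is decisive), builds $\mathfrak{c}^+$ many disjointly supported maximal elements, and finally applies the Erd\H{o}s--Rado partition relation $\mathfrak{c}^+\to(\omega_1)_2^2$ in place of Ramsey's theorem. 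The hypothesis $\mathsf d(X)>\mathfrak{c}$ enters exactly at this Ramsey-theoretic step, not through any analogue of Lemma~\ref{gc}.
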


\begin{proof}
Let $\lambda=\mathsf{d}(X)$; we may suppose that $\lambda=\mathfrak{c}^+$, the successor of the continuum. Pick any $1$-projectional skeleton $\mathfrak{s}=\{R_s\}_{s\in\Gamma}$ in $X$ such that $D(\mathfrak{s})=X^\ast$. We will construct inductively:
\begin{itemize}
\item $\{T_\alpha\}_{\omega\leq\alpha\leq\lambda}$, a~continuous chain of upwards-directed subsets $T_\alpha\subset\Gamma$ such that
\begin{itemize}\item $T_\alpha\subset T_{\alpha+1}$ ($\omega\leqslant \alpha \leqslant \lambda$),
\item $\abs{T_\alpha}\leq\max\{|\alpha|, \omega\}$ ($\omega\leqslant \alpha \leqslant \lambda$), and 
\item $\bigcup_{s\in T_\lambda}R_s(X)$ is dense in $X$\end{itemize} (here, by a {\it continuous chain}, we understand a chain such that for every limit ordinal $\delta\leq\lambda$ we have $T_\delta=\bigcup_{\alpha<\delta}T_\alpha$; the density condition is not really essential, however, in this way we may guarantee that the resulting sequence of corresponding projections gives a~PRI in $X$),
\item $(\xi_\alpha)_{\omega\leq\alpha<\lambda}$, a~strictly increasing sequence of ordinals less than $\lambda$ such that $|T_\alpha|\leqslant |\xi_\alpha|$ ($\omega\leqslant \alpha \leqslant \lambda$),
\item $(x_\alpha)_{\omega\leq\alpha<\lambda}$, a sequence of norm-one vectors in $X$,
\item $(x_\alpha^\ast)_{\omega\leq\alpha<\lambda}$, a~sequence of norm-one functionals in $X^\ast$
\end{itemize}
so that, under the notation
\begin{equation}\label{XP}
X_\alpha=\oo{\bigcup_{s\in T_\alpha}R_s(X)}\,\,\,\,\mbox{ and }\,\,\, P_\alpha=\lim_{s\in T_\alpha}R_s\quad (\omega<\alpha\leq\lambda),
\end{equation}
the following conditions are satisfied:
\begin{itemize}
\item[(i)] $x_\alpha\in (P_{\xi_\alpha+1}-P_{\xi_\alpha})(X)$;

\item[(ii)] $x_\alpha^\ast\in P_{\xi_\alpha+2}^\ast(X^\ast)$;

\item[(iii)] $\langle x_\alpha,x_\alpha^\ast\rangle=1$;

\item[(iv)] $x_\alpha\in\bigcap_{\beta<\alpha}\mathrm{ker}(x_\beta^\ast)$, for each $\omega\leq\alpha<\lambda$.
\end{itemize}

First, choose any dense subset $\{d_\alpha\}_{\omega\leq\alpha<\lambda}$ of $X$. We set $T_\omega=\varnothing$, $X_\omega=\{0\}$ and $P_\omega=0$. Now, pick any countable upwards-directed set $T_{\omega+1}$ with $d_\omega\in X_{\omega+1}$ ($X_\alpha$'s and $P_\alpha$'s will always be given by \eqref{XP}). Set $\xi_\omega=\omega$ and pick $x_\omega$ as any unit vector from $X_{\omega+1}$. Choose a~norm-one functional $x_\omega^\ast$ with $\langle x_\omega,x_\omega^\ast\rangle=1$; since $D(\mathfrak{s})=X^\ast$ we may find an $s\in\Gamma$ with $x_\omega^\ast\in R_s^\ast(X^\ast)$. Thus, $x_\omega^\ast$ may be written in the form $z^\ast\circ R_s$ for some $z^\ast\in X^\ast$. We now take $T_{\omega+2}$ to be a~countable, upwards-directed subset of $\Gamma$ such that $\{s\}\cup T_{\omega+1}\subseteq T_{\omega+2}$ and $d_{\omega+1}\in X_{\omega+2}$. It is easy to see that $X_{\omega+2}$ is just the range of $P_{\omega+2}$, thus $x_\omega^\ast\in P_{\omega+2}^\ast(X^\ast)$. Hence, all the conditions (i)--(iv) are established for $\alpha=\omega$ (the last one being vacuous at the moment).

Given any $\delta\in (\omega,\lambda)$, suppose we have already defined: 
\begin{itemize}
\item ordinals $\xi_\alpha$ for $\alpha<\delta$, 
\item sets $T_\beta$ as above, for all indices $\beta$ satisfying $\beta\leq\xi_\alpha+2$ for some $\alpha<\delta$ (and hence also all corresponding $X_\alpha$'s and $P_\alpha$'s),
\item vectors $x_\alpha$ and functionals $x_\alpha^\ast$ for $\alpha<\delta$,
\end{itemize}
so that conditions (i)--(iv) hold true for every $\alpha<\delta$. 

\medskip
If $\delta$ is a~limit ordinal, we need to define $T_{\zeta}$ for $\zeta:=\sup_{\alpha<\delta}\xi_\alpha$. Of course, we set $T_\zeta=\bigcup_{\beta<\zeta}T_\beta$. Notice that $P_\zeta^\ast(X^\ast)$ is then a~weak$^\ast$ closed subspace of $X^\ast$ which does not separate points of $X$ (in fact, we have $^\perp P_\zeta^\ast(X^\ast)=\mathrm{ker}P_\zeta$), so that we can find a~unit vector $x_\delta\in\bigcap_{\beta<\delta}\mathrm{ker}(x_\beta^\ast)$. We set $\xi_\delta=\zeta$ and pick an upwards-directed set $T_{\xi_\delta+1}\subset\Gamma$ such that $T_{\xi_\delta}\subset T_{\xi_\delta+1}$, $\abs{T_{\xi_\delta+1}}\leq |\xi_\delta|$ and $\{d_{\xi_\delta},x_\delta\}\subset X_{\xi_\delta+1}$. Finally, we find a~norm-one functional $x_\delta^\ast$ with $\langle x_\delta,x_\delta^\ast\rangle=1$, pick an index $s\in\Gamma$ so that $x_\delta^\ast\in R_s^\ast(X^\ast)$ and next we choose an upwards-directed set $T_{\xi_\delta+2}\subset\Gamma$ satisfying: $\{s\}\cup T_{\xi_\delta+1}\subset T_{\xi_\delta+2}$, $\abs{T_{\xi_\delta+2}}\leq|\xi_\delta|$ and $d_{\xi_\delta+1}\in X_{\xi_\delta+2}$.

\medskip
If $\delta$ is a successor, say $\delta=\gamma+1$, we pick a~unit vector $x_\delta\in\mathrm{ker}P_\gamma$ and then we just repeat the scheme from the first step in an obvious manner.\medskip
Having guaranteed all the conditions (i)--(iv), notice that we can build an Auerbach system in $X\times X^\ast$ of length $\lambda$ in the following way:
For each $\alpha\in [\omega,\lambda)$ we define a~functional $\w{x_\alpha}\in X^{\ast\ast}$ as the composition of the evaluation functional at $x_\alpha$ with the projection $P_{\xi_\alpha+2}^\ast$, that is,
$$
\langle x^\ast,\w{x_\alpha}\rangle=\langle x_\alpha, P_{\xi_\alpha+2}^\ast x^\ast\rangle\qquad (x^\ast\in X^\ast).
$$
By virtue of (i)--(iv), we have:
$$
\langle x_\beta^\ast,\w{x_\alpha}\rangle=\langle x_\alpha,P_{\xi_\alpha+2}^\ast x_\beta^\ast\rangle=\left\{\begin{array}{llll}
\langle x_\alpha,0\rangle&=&0 & \mbox{if }\alpha<\beta,\\
\langle x_\alpha,x_\alpha^\ast\rangle&=& 1 & \mbox{if }\alpha=\beta,\\
\langle x_\alpha,x_\beta^\ast\rangle&=&0 & \mbox{if }\alpha>\beta.
\end{array}\right.
$$
Therefore, we have obtained a system $\{(x_\alpha^\ast, \w{x_\alpha})\}_{\omega\leq\alpha<\lambda}\subset X^\ast\times X^{\ast\ast}$ satisfying the following conditions:
\begin{itemize}
\item[(v)] $\n{x_\alpha^\ast}=\n{\w{x_\alpha}}=1$ for each $\omega\leq\alpha<\lambda$;
\item[(vi)] $\langle x_\beta^\ast, \w{x_\alpha}\rangle=\delta_{\alpha\beta}$ for all $\omega\leq\alpha,\beta<\lambda$;
\item[(vii)] $\w{x_\alpha}$ is weak$^\ast$-continuous for each $\omega\leq\alpha<\lambda$,
\end{itemize}
that is, $\{(\w{x_\alpha},x_\alpha^\ast)\}_{\omega\leq\alpha<\lambda}$ forms an Auerbach system in $X\times X^\ast$ of length $\lambda$.

\medskip
Now, define 
$$
\mathcal{A}=\Bigl\{\bb{x}=(\langle x^\ast,\w{x_\alpha}\rangle)_{\omega\leq\alpha<\lambda}\in\R^{[\omega,\lambda)}\colon\, \n{x^\ast}\leq 1\mbox{ and }\langle x^\ast,\w{x_\alpha}\rangle\not=0\mbox{ for countably many }\alpha\mbox{'s}\Bigr\}.
$$
Plainly, $\mathcal{A}$ enjoys the following three properties:
\begin{itemize*}
\item[(a1)] $\bb{e}_\alpha\in \mathcal{A}$ for every $\omega\leq\alpha<\lambda$, where $\bb{e}_\alpha$ is the $\alpha^{{\rm th}}$ vector from the canonical basis of the vector space $\R^{[\omega,\lambda)}$;
\item[(a2)] if $\bb{x}\in \mathcal{A}$, then $-\bb{x}\in \mathcal{A}$;
\item[(a3)] $\supp(\bb{x}):=\{\omega\leq\alpha<\lambda\colon \bb{x}(\alpha)\not=0\}$ is countable for every $\bb{x}\in \mathcal{A}$. (We write $\bb{x}(\alpha)$ for $\langle x^\ast,\w{x_\alpha}\rangle$, the $\alpha^{{\rm th}}$ coordinate of $\bb{x}$.)
\end{itemize*}
Assume, in search of a contradiction, that every uncountable subset of the unit ball of $X^\ast$ contains two distinct elements at distance not larger than $1$. Then, the set $\mathcal{A}$ satisfies also the additional condition:
\begin{itemize*}
\item[(a4)] for every uncountable set $\mathcal{B}\subseteq \mathcal{A}$ there exist $\bb{x},\bb{y}\in \mathcal{B}$ with $\bb{x}\not=\bb{y}$ such that $\bb{x}-\bb{y}\in \mathcal{A}$.
\end{itemize*}

For any two sequences $\bb{x},\bb{y}\in \mathcal{A}$ we shall say that $\bb{y}$ {\it extends} $\bb{x}$ if and only if there exists a~third sequence $\bb{z}\in \mathcal{A}$ such that the following conditions are satisfied:
\begin{itemize*}
\item[(e1)] $\bb{x}(\alpha)=\bb{y}(\alpha)=\bb{z}(\alpha)$ for each $\alpha\leqslant \sup \supp(\bb{x})$;
\item[(e2)] there is an ordinal number $\beta$ with $\sup(\supp(\bb{x}))<\beta<\lambda$ such that $\bb{y}(\beta)>0$ and $\bb{z}(\beta)=-1$.
\end{itemize*}
In such a case we say that $\bb{z}$ is a~{\it witness} of $\bb{y}$ extending $\bb{x}$ at the $\beta^{{\rm th}}$ coordinate. By a~{\it chain} starting with an element $\bb{x}\in \mathcal{A}$ we mean any sequence $(\bb{x}_\alpha)_{0\leq\alpha<\xi}$, where $\xi\geq 0$ is an ordinal number, for which $\bb{x}_0=\bb{x}$ and $\bb{x}_{\alpha+1}$ extends $\bb{x}_\alpha$ for every $\alpha\geq 0$ with $\alpha+1<\xi$.

\medskip
Let $(\bb{x}_\alpha)_{0\leq\alpha<\xi}$ be a chain and, for any $\alpha\geq 0$ with $\alpha+1<\xi$, let $\bb{z}_\alpha$ be a~witness of $\bb{x}_{\alpha+1}$ extending $\bb{x}_\alpha$ at the $\beta_\alpha^{{\rm th}}$ coordinate. Then, according to (e2), we have $\bb{z}_\alpha(\beta_\alpha)=-1$ and $\bb{x}_{\alpha+1}(\beta_\alpha)>0$ for all $\alpha$'s as above. Moreover, by (e1), we obtain $\bb{z}_\gamma(\beta_\alpha)>0$ whenever $\gamma>\alpha$. Consequently, for any two ordinals $\alpha$, $\beta$ with $1\leq\alpha+1<\beta+1<\xi$ at least one of the coordinates of $\bb{z}_\beta-\bb{z}_\alpha$ is larger than one, hence $\bb{z}_\beta-\bb{z}_\alpha\not\in \mathcal{A}$. Therefore, condition (a4) implies that every chain in $\mathcal{A}$ is at most countable. On the other hand, in view of the Kuratowski--Zorn lemma, for every $\bb{x}\in \mathcal{A}$ there exists a~maximal chain starting with $\bb{x}$. Now, we {\it claim} that every such maximal chain contains a~maximal element, \emph{i.e.}, a~sequence $\bb{y}\in A$ which  extends $\bb{x}$ and which has no further extension.\medskip

Indeed, let $\mathscr{C}$ be a~maximal chain starting with some $\bb{x}\in \mathcal{A}$ for which there is no maximal extension. Then, there is a~countable sequence $(\bb{y}_n)_{n=1}^\infty\subset\mathscr{C}$ so that $\bb{y}_{n+1}$ extends $\bb{y}_n$ for each $n\in\N$, but there is no $\bb{y}\in\mathscr{C}$ extending all $\bb{y}_n$'s. Let $y_n^\ast\in B_{X^\ast}$ be given so that
$$
\bb{y}_n=(\langle y_n^\ast,\w{x_\alpha}\rangle)_{\omega\leq\alpha<\lambda}\qquad (n\in\N).
$$
Take $z^\ast$ to be any weak$^\ast$-cluster point of $\{y_n^\ast\colon n\in\N\}$. Then, obviously $z^\ast$ lies in the unit ball of $X^\ast$ and $\langle z^\ast,\w{x_\alpha}\rangle=0$ for all but countably many $\alpha$'s, since the same property is shared by each $y_n^\ast$. Thus, $z^\ast$ gives rise to the element $\bb{z}=(\langle z^\ast,\w{x_\alpha}\rangle)_{\omega\leq\alpha<\lambda}$ of $\mathcal{A}$ which extends each $\bb{y}_n$ by the very definition. (Here, we employed the weak$^\ast$-continuity of $P_\alpha^\ast$'s.)

\medskip
Now, we define by transfinite recursion a~sequence $(\bb{x}_\alpha)_{\omega\leq\alpha<\lambda}$ of maximal extensions as follows. First, let $\bb{x}_0$ be any maximal extension of $\bb{e}_\omega$. Now, if $\beta<\lambda$ and all the terms $\bb{x}_\alpha$, for $\omega\leq\alpha<\beta$, have been already defined, we pick any ordinal $\gamma$ with $$\sup\, \bigcup_{0\leq\alpha<\beta}\supp(\bb{x}_\alpha)<\gamma<\lambda$$and take $\bb{x}_\beta$ to be any maximal extension of $\bb{e}_\gamma$. (Recall that all maximal chains are countable so the entire support of all $\bb{x}_\alpha$'s is not cofinal in $\lambda$ because $\lambda$, being a successor cardinal, is regular; see \cite[Corollary~5.3]{jech}; we have actually more as the vectors $(\bb{x}_\alpha)_{\omega\leq\alpha<\lambda}$ are disjointly supported.) The rest is the same as in Kottman's proof; just instead of Ramsey's theorem we need its variation for larger cardinals, that is, the Erd\H{o}s--Rado theorem: $$\mathfrak{c}^+\to (\omega_1)_2^2$$ ({\it cf.} \cite[Theorem~9.6]{jech}). So, define a colouring $\mathsf{c}\colon [[\omega,\lambda)]^2\to\{0,1\}$ of all $2$-element subsets of $[\omega,\lambda)$ as follows
$$\mathsf{c}(\{\alpha,\beta\})=\left\{\begin{array}{ll}
0 & \mbox{if }\bb{x}_\alpha-\bb{x}_\beta\in \mathcal{A},\\
1 & \mbox{if }\bb{x}_\alpha-\bb{x}_\beta\not\in \mathcal{A}.
\end{array}\right.
$$
Note that by property (a2) this definition is well-posed. By the Erd\H{o}s--Rado theorem, there exists an uncountable set $\mathcal{B}\subset \mathcal{A}$ such that $$\text{either}\quad[\mathcal{B}]^2\subset \mathsf{c}^{-1}(\{0\})\quad\text{or}\quad[\mathcal{B}]^2\subset \mathsf{c}^{-1}(\{1\}).$$ However, according to (a4), only the former possibility may occur, and by relabeling we may assume that $\mathcal{B}=\mathcal{A}$. Property (a4) implies that there are two ordinal numbers $\alpha$ and $\beta$ with $\omega\leqslant \alpha<\beta< \lambda$ and such that $(\bb{x}_\alpha-\bb{x}_{\alpha+1})-(\bb{x}_\beta-\bb{x}_{\beta+1})\in A$. But also $\bb{x}_\alpha-\bb{x}_{\beta+1}\in A$ which shows that $\bb{x}_\alpha-\bb{x}_{\beta+1}$ is a~witness of  $\bb{x}_\alpha-\bb{x}_{\alpha+1}-\bb{x}_\beta+\bb{x}_{\beta+1}$ extending $\bb{x}_\alpha$; a~contradiction with the maximality of $\bb{x}_\alpha$.
\end{proof}

\begin{remark}
As it was shown by Fabian and Godefroy (\cite{fabian_godefroy}), the dual of any Asplund space always admits a~PRI. However, it may happen that the corresponding projections $P_\alpha$'s fail to be continuous in the weak$^\ast$ topology (for a~specific example, see \cite[Remark~5]{fabian_godefroy}). This shows that our hypothesis, implying the existence of a~special PRI in $X$ itself, was crucial for our considerations.
\end{remark}

\section{Spaces of continuous functions}\label{finalsection}

\subsection{Compact Hausdorff spaces} Throughout this section $K$ stands for an infinite, compact, Hausdorff space unless otherwise stated. By $C(K)$ we denote the familiar Banach space of scalar-valued continuous functions on $K$ furnished with the supremum norm. \medskip

With the aid of the Urysohn lemma, it is easy to build a 2-separated (hence, by the triangle inequality, equilateral) sequence in the unit sphere of $C(K)$. Thus it is natural to ask whether $\mathsf{eo}(C(K))>\omega$ in the case where $K$ is non-metrisable. Recently, Koszmider (\cite{kosz}) established independence from the axioms of set theory of the answer to the above question. We offer therefore a~number of sufficient conditions implying uncountability of $\mathsf{eo}(C(K))$.

\begin{proposition}\label{subspaces}Suppose that $\mathsf{eo}(C(K))$ is countable. Then, given a closed subset $L\subseteq K$, $\mathsf{eo}(C(L))$ is countable too. Moreover, if $C(L)$ contains a 2-equilateral set of cardinality $\kappa$, then so does $C(K)$.\end{proposition}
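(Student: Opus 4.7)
The plan is to exploit the Tietze extension theorem. Since $K$ is compact Hausdorff and hence normal, every $f\in C(L)$ admits a continuous extension $\tilde f\in C(K)$ with $\|\tilde f\|_{C(K)}=\|f\|_{C(L)}$. Both assertions of the proposition are then obtained by transferring a separated (respectively equilateral) family from $S_{C(L)}$ to $S_{C(K)}$ via such norm-preserving extensions, and observing that the relevant distance estimate survives the extension because restriction to $L$ only decreases the supremum norm.

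For the first claim, I would begin with a $(1+\varepsilon)$-separated family $\{f_i\}_{i\in I}\subseteq S_{C(L)}$ and pick norm-preserving Tietze extensions $\tilde f_i\in C(K)$; then $\tilde f_i\in S_{C(K)}$ and, since $(\tilde f_i-\tilde f_j)|_L=f_i-f_j$, one has
\[
\|\tilde f_i-\tilde f_j\|_{C(K)}\;\geq\;\|f_i-f_j\|_{C(L)}\;\geq\;1+\varepsilon
\]
for distinct $i,j$. This shows $|I|\leq\mathsf{eo}(C(K))$, and taking suprema over $I$ and $\varepsilon>0$ yields $\mathsf{eo}(C(L))\leq\mathsf{eo}(C(K))$; the contrapositive gives the stated implication.

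For the moreover part, I would apply the same extension procedure to a 2-equilateral family $\{f_\alpha\}_{\alpha<\kappa}\subseteq S_{C(L)}$. The lower bound $\|\tilde f_\alpha-\tilde f_\beta\|_{C(K)}\geq 2$ is established exactly as above; the matching upper bound $\|\tilde f_\alpha-\tilde f_\beta\|_{C(K)}\leq \|\tilde f_\alpha\|_{C(K)}+\|\tilde f_\beta\|_{C(K)}=2$ is immediate from the triangle inequality. Hence $\{\tilde f_\alpha\}_{\alpha<\kappa}\subseteq S_{C(K)}$ is a 2-equilateral set of cardinality $\kappa$. There is no genuine obstacle in either step; the whole argument reduces to Tietze's theorem and the trivial remark that restriction does not increase sup norms.
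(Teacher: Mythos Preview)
Your proof is correct and follows essentially the same route as the paper: both use norm-preserving Tietze--Urysohn extensions of functions from $C(L)$ to $C(K)$ and the observation that restriction to $L$ does not increase the supremum norm. The paper's proof is stated more tersely (it does not spell out the ``moreover'' clause, leaving it implicit from the same argument), but the content is identical.
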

\begin{proof}Arguing by contraposition, let $L\subseteq K$ be a closed subspace and suppose that there exists an uncountable $(1+\varepsilon)$-separated family $\{f_i\colon i\in I\}$ of unit vectors in the sphere of $C(L)$. Let $\hat{f}_i$ be a~norm-preserving Tietze--Urysohn extension of $f_i$ to $K$ ($i\in I$). Then $\{\hat{f}_i\colon i\in I\}$ witnesses uncountability of $\mathsf{eo}(C(K))$. \end{proof}
A similar proof gives the following result.
\begin{proposition}\label{subspaces2}Suppose that $\mathsf{k}(C(K))$ is countable. Then, given a closed subset $L\subseteq K$, $\mathsf{k}(C(L))$ is countable too. \end{proposition}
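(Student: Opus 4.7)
The plan is to mimic the proof of Proposition~\ref{subspaces} almost verbatim, since the only distinction from the $(1+\varepsilon)$-separated setting is that the separation constant $1$ is witnessed coordinate-wise at each individual pair rather than uniformly, and this makes no difference for the argument.

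I would argue by contraposition: assume that $L\subseteq K$ is closed and that $\mathsf{k}(C(L))$ is uncountable, so that there exists an uncountable family $\{f_i\colon i\in I\}\subseteq S_{C(L)}$ of unit vectors with $\|f_i-f_j\|_{C(L)}>1$ for all distinct $i,j\in I$. For each $i\in I$, invoke the Tietze--Urysohn extension theorem (applicable because $K$ is compact Hausdorff, hence normal, and $L$ is closed in $K$) to obtain a norm-preserving extension $\hat f_i\in C(K)$ with $\hat f_i|_L=f_i$ and $\|\hat f_i\|_{C(K)}=\|f_i\|_{C(L)}=1$.

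The key observation is that restriction to $L$ is a contraction, so for distinct $i,j\in I$,
\[
\|\hat f_i-\hat f_j\|_{C(K)} \;\geq\; \|(\hat f_i-\hat f_j)|_L\|_{C(L)} \;=\; \|f_i-f_j\|_{C(L)} \;>\; 1.
\]
Thus $\{\hat f_i\colon i\in I\}$ is an uncountable $(1+)$-separated subset of $S_{C(K)}$, contradicting the assumption that $\mathsf{k}(C(K))$ is countable.

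There is no real obstacle here; the argument is a direct transcription of the proof of Proposition~\ref{subspaces}, and the strict inequality $>1$ is preserved by the extension because it is preserved pointwise under restriction. Note that one does \emph{not} obtain the analogue of the second sentence of Proposition~\ref{subspaces} (about $2$-equilateral sets of a given cardinality) for free here, and indeed the statement of Proposition~\ref{subspaces2} omits it; the strict-inequality version alone is what is claimed and what the proof gives.
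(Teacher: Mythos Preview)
Your proof is correct and is precisely the argument the paper has in mind: the paper simply writes ``A similar proof gives the following result'' immediately after Proposition~\ref{subspaces}, and what you have written is exactly that similar proof spelled out. Your closing remark about why the $2$-equilateral clause is omitted is also apt.
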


Certainly (hereditary) separability of $K$ is a necessary condition for countability of $\mathsf{eo}(C(K))$. To see this, note that if $K$ is non-separable, there exist families $$\{x_\alpha\colon \alpha<\omega_1\}\quad \text{and}\quad \{U_\alpha\colon \alpha<\omega_1\}$$ consisting of distinct points in $K$ and open subsets of $K$, respectively, with the property that for each $\alpha<\omega_1$ we have $x_\alpha\in U_\alpha$ and $x_\beta\notin U_\alpha$ for all $\beta<\alpha$. Indeed, the closure of each countable set in $K$ is a proper subset of $K$ and by complete regularity proper closed subsets can be separated by open sets from elements in the complement. Using Urysohn's lemma we may thus build continuous functions $f_\alpha\colon K\to [-1,1]$ such that $f(x_\alpha) = 1$ and $f(x) = -1$ for all $x\in K\setminus U_\alpha$. It then follows that $\{f_\alpha\colon \alpha<\omega_1\}$ is a 2-separated subset of the unit sphere of $C(K)$. Taking into account  Proposition~\ref{subspaces} and the Tietze--Urysohn Extension Theorem, we arrive at the following conclusion.
\begin{proposition}\label{hs}Suppose that $K$ is not hereditarily separable. Then $\mathsf{eo}(C(K))$ is uncountable. \end{proposition}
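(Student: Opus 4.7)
My plan is to recycle the construction presented in the paragraph immediately above, but to carry it out inside a non-separable subspace $L\subseteq K$, whose existence is guaranteed by the failure of hereditary separability of $K$. The goal is to produce, by transfinite recursion on $\alpha<\omega_1$, points $x_\alpha\in L$ and functions $f_\alpha\colon K\to[-1,1]$ in $S_{C(K)}$ such that $f_\alpha(x_\alpha)=1$ and $f_\alpha(x_\beta)=-1$ whenever $\beta<\alpha$. This will yield a $2$-separated family of cardinality $\omega_1$ in $S_{C(K)}$, and hence $\mathsf{eo}(C(K))\geq\omega_1$, because $2$-separation is $(1+\e)$-separation with $\e=1$.

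For the recursive step, suppose $(x_\beta)_{\beta<\alpha}$ have already been chosen. Using the identity $\oo{A}^L=\oo{A}^K\cap L$ valid for any $A\subseteq L$, I observe that if $L$ were contained in $\oo{\{x_\beta\colon\beta<\alpha\}}^K$, then the countable set $\{x_\beta\colon\beta<\alpha\}$ would be dense in $L$, contradicting non-separability of $L$. So I can pick $x_\alpha\in L\setminus\oo{\{x_\beta\colon\beta<\alpha\}}^K$, and since $K$ is normal, I apply Urysohn's lemma to the disjoint closed sets $\{x_\alpha\}$ and $\oo{\{x_\beta\colon\beta<\alpha\}}^K$ to obtain a continuous $f_\alpha\colon K\to[-1,1]$ with $f_\alpha(x_\alpha)=1$ and $f_\alpha\equiv-1$ on that closure. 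The separation check is then immediate: for any $\gamma<\alpha<\omega_1$ one has $f_\gamma(x_\gamma)=1$ and $f_\alpha(x_\gamma)=-1$, whence $\|f_\alpha-f_\gamma\|\geq 2$.

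The only delicate point is the inductive step itself, namely the passage from non-separability of the (possibly non-closed) subspace $L\subseteq K$ to the existence, at each countable ordinal $\alpha$, of a point of $L$ missed by the $K$-closure of the predecessors. Once the elementary identity $\oo{A}^L=\oo{A}^K\cap L$ is invoked this is routine, and the remainder of the argument is a direct transcription of the construction already displayed above the proposition for the case $L=K$.
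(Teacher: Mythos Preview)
Your proposal is correct and follows essentially the same construction as the paper: build a left-separated $\omega_1$-sequence inside the non-separable subspace and apply Urysohn's lemma to obtain a $2$-separated family in $S_{C(K)}$. Your direct application of Urysohn's lemma in $K$---separating $\{x_\alpha\}$ from $\oo{\{x_\beta:\beta<\alpha\}}^K$---is arguably cleaner than the paper's suggested route through Proposition~\ref{subspaces} and the Tietze--Urysohn extension, since it sidesteps any need for the non-separable subspace $L$ to be closed.
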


The case where $K$ is totally disconnected is even easier. Indeed, take an uncountable collection of clopen subets sets $U_i$ ($i\in I$), in which case
$$\{\mathds{1}_{U_i} - \mathds{1}_{K\setminus U_i}\colon i \in I\}$$
forms an uncountable 2-separated (hence equilateral) subset of the unit sphere in $C(K)$. It is a standard fact from the point-set topology that $\mathsf d(C(K))$ is equal to $w(K)$, the minimal cardinality of a basis for $K$ (\emph{the weight}) if $K$ is infinite and $\omega$, otherwise. Let us then record formally the following observation.

\begin{proposition}\label{td}Suppose that $K$ contains a closed, totally disconnected subspace $L$. Then $\mathsf{eo}(C(K))\geqslant \mathsf{d}(C(L))$. \end{proposition}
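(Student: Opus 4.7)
The plan is to construct the required family directly on $L$ using its Boolean algebra of clopens, and then transport to $K$ via the Tietze--Urysohn extension argument already used in Proposition~\ref{subspaces}. If $L$ is finite, then $\mathsf d(C(L))=\omega$ and the claim is trivial, since $C(K)$ already contains an infinite 2-separated family built with Urysohn's lemma. So assume $L$ is infinite; then, as the author recorded above, $\mathsf d(C(L)) = w(L)$.

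Since $L$ is totally disconnected, compact Hausdorff, the collection $\mathrm{CO}(L)$ of clopen subsets of $L$ is a basis for its topology. A basis can always be refined to a basis of minimal size $w(L)$, and this can be taken inside $\mathrm{CO}(L)$; hence there is a family $\mathcal U \subseteq \mathrm{CO}(L)$ of pairwise distinct clopen sets with $|\mathcal U| = w(L) = \mathsf d(C(L))$. For each $U \in \mathcal U$ set
\[
g_U = \mathds{1}_U - \mathds{1}_{L\setminus U} \in S_{C(L)}.
\]
For distinct $U,V\in\mathcal U$ the symmetric difference $U\triangle V$ is non-empty and on it the function $g_U - g_V$ takes the value $\pm 2$, so $\|g_U - g_V\|_{C(L)} = 2$. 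Thus $\{g_U\colon U\in\mathcal U\}$ is a $2$-separated (in particular equilateral) subset of $S_{C(L)}$ of cardinality $\mathsf d(C(L))$.

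Finally, for each $U\in\mathcal U$ use the Tietze--Urysohn Extension Theorem to pick a norm-preserving extension $\widehat{g_U} \in S_{C(K)}$ of $g_U$, exactly as in the proof of Proposition~\ref{subspaces}. Restricting back to $L$ preserves the supremum norm from below, so for distinct $U,V\in\mathcal U$
\[
\|\widehat{g_U} - \widehat{g_V}\|_{C(K)} \;\geq\; \|g_U - g_V\|_{C(L)} \;=\; 2,
\]
which is in particular $(1+1)$-separation. Therefore $\{\widehat{g_U}\colon U\in\mathcal U\}$ witnesses $\mathsf{eo}(C(K)) \geq |\mathcal U| = \mathsf d(C(L))$.

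There is no real obstacle in the argument; the only points worth underlining are the identity $\mathsf d(C(L))=w(L)$ for infinite compact Hausdorff $L$ (recorded just before the statement) and the elementary fact that total disconnectedness of $L$ forces the Boolean algebra of clopens of $L$ to contain a subfamily of size $w(L)$.
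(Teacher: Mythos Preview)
Your proof is correct and follows essentially the same approach as the paper: build the $2$-separated family $\{\mathds{1}_U-\mathds{1}_{L\setminus U}\}$ from a family of clopens in $L$ of size $w(L)=\mathsf d(C(L))$, then extend norm-preservingly to $K$ via Tietze--Urysohn as in Proposition~\ref{subspaces}. You are simply a bit more explicit than the paper in handling the finite case and in justifying that the Boolean algebra of clopens contains $w(L)$ many distinct members.
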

One may wonder whether each non-metrisable compact, Hausdorff space $K$ contains a~non-metrisable, totally disconnected closed subspace---this in the light of Proposition~\ref{td}, would be sufficient to conclude uncountability of $\mathsf{eo}(C(K))$. Nyikos (\cite[Example 6.17]{nyikos}) constructed, assuming Jensen's Diamond Principle $\diamondsuit$, a non-metrisable, compact manifold $K$ whose each non-metrisable, closed subspace contains a copy of the unit interval (thus, it is not totally disconnected). Therefore, one cannot hope to prove such a~theorem about totally disconnected subspaces in $\mathsf{ZFC}$ only (this also follows from the main result of \cite{kosz}). However Nyikos' manifold is non-separable, hence by Proposition~\ref{hs}, the number $\mathsf{eo}(C(K))$ is uncountable. \medskip

We conjecture that, assuming the Proper Forcing Axiom, every non-metrisable, perfectly normal compact space $K$ contains a non-metrisable totally disconnected closed subspace\footnote{Added in proof: Very recently, Koszmider has constructed a~\textsf{ZFC} example of a non-metrisable, compact Hausdorff space whose each totally disconnected closed subspace is metrisable (\texttt{arXiv:1509.05282}), thereby answering our question in the negative.}---according to Proposition~\ref{td}, this would be sufficient to derive uncountability of $\mathsf{eo}(C(K))$.\medskip

The existence of a non-separable Radon measure on $K$ is also sufficient for uncountability of $\mathsf{eo}(C(K))$.

\begin{proposition}\label{maharam}Suppose that there exists a positive measure $\mu\in C(K)^*$ such that $L_1(\mu)$ is non-separable. Then $$\mathsf{eo}(C(K))\geqslant \mathsf d(L_1(\mu))>\omega.$$ 
\end{proposition}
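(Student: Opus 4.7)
The plan is to build a $2$-equilateral subset of $S_{C(K)}$ of cardinality $\kappa := \mathsf{d}(L_1(\mu))$. By the Riesz representation theorem I regard $\mu$ as a finite positive regular Borel measure on $K$, and after rescaling assume that $\mu$ is a probability measure. The first ingredient is the measure-theoretic claim that there exist a constant $c \in (0, \tfrac{1}{2}]$ and a family $\{E_\alpha : \alpha < \kappa\}$ of Borel subsets of $K$ satisfying $\mu(E_\alpha \triangle E_\beta) \geq c$ for $\alpha \neq \beta$ and $\min\{\mu(E_\alpha), \mu(K\setminus E_\alpha)\} \geq c$ for every $\alpha$. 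This is read off Maharam's structure theorem: the atomless part of the measure algebra of $\mu$ decomposes as a direct sum of homogeneous components indexed by at most countably many cardinals $\lambda_i$, the density of $L_1(\mu)$ equals the supremum of the $\lambda_i$ combined with the cardinality of the atomic part, and within a homogeneous component of type $\kappa$ the independent ``coordinate'' sets of $\{0,1\}^\kappa$ with normalised Haar measure provide a family of sets each of measure $\tfrac{1}{2}$ with pairwise symmetric differences equal to $\tfrac{1}{2}$, furnishing the required family with $c = \tfrac{1}{2}$.

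The second ingredient is a standard Urysohn lift. Inner regularity of $\mu$ provides compact sets $F_\alpha \subset E_\alpha$ and $F'_\alpha \subset K \setminus E_\alpha$ with $\mu(E_\alpha \setminus F_\alpha), \mu((K\setminus E_\alpha)\setminus F'_\alpha) < c/8$; both sets carry positive measure, hence are nonempty, and Urysohn's lemma yields $f_\alpha \in C(K)$ with $|f_\alpha| \leq 1$, $f_\alpha \equiv 1$ on $F_\alpha$, and $f_\alpha \equiv -1$ on $F'_\alpha$, whence $\|f_\alpha\|_\infty = 1$. For distinct $\alpha, \beta < \kappa$ one may assume, after relabelling, that $\mu(E_\alpha \setminus E_\beta) \geq c/2$, in which case
\[
\mu(F_\alpha \cap F'_\beta) \;\geq\; \mu(E_\alpha \setminus E_\beta) - \mu(E_\alpha \setminus F_\alpha) - \mu((K\setminus E_\beta)\setminus F'_\beta) \;\geq\; c/2 - c/4 \;>\; 0,
\]
so the compact set $F_\alpha \cap F'_\beta$ is nonempty and $f_\alpha(x) - f_\beta(x) = 2$ on it; consequently $\|f_\alpha - f_\beta\|_\infty = 2$. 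Hence $\{f_\alpha : \alpha < \kappa\}$ is a $2$-equilateral family in the unit sphere of $C(K)$, and $\mathsf{eo}(C(K)) \geq \kappa$.

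The principal obstacle is matching the full cardinal $\mathsf{d}(L_1(\mu))$ in the first step. For the weaker statement that $\mathsf{eo}(C(K))$ is merely uncountable, no structure theorem is needed: if every $(1/n)$-separated family in the measure algebra were countable, then gluing one maximal such family for each $n \in \mathbb{N}$ would yield a countable dense subset of $L_1(\mu)$, contradicting non-separability. Attaining the optimal cardinality, however, requires invoking Maharam's classification so as to pin down which homogeneous component of the measure algebra is responsible for the density of $L_1(\mu)$ and to exploit the independent coordinate sets available there.
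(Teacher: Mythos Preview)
Your argument is correct and follows a genuinely different path from the paper's. Both proofs invoke Maharam's structure theorem, but the paper uses it to embed $\ell_2(\lambda)$ isometrically into $L_1(\mu)$ via a family of $\lambda$ independent Gaussians on $\{0,1\}^\lambda$; by duality this exhibits $\ell_2(\lambda)$ as an isometric quotient of $C(K)$, and Proposition~\ref{lifting} then lifts the orthonormal basis to a $(1+\delta)$-separated family in $S_{C(K)}$ for any $\delta<\sqrt{2}-1$. You instead pull out the independent \emph{coordinate sets} from a homogeneous component and lift them directly to $C(K)$ via inner regularity and Urysohn's lemma. Your route is more elementary---no Gaussian embedding, no abstract quotient-lifting lemma---and it delivers the stronger conclusion of a $2$-equilateral family rather than merely a $(1+\varepsilon)$-separated one. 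One small caveat (which the paper's argument shares): you tacitly assume that a homogeneous component of type exactly $\kappa$ exists, which fails when $\kappa=\sup_i\lambda_i$ is not attained; this is harmless for the proposition as stated, since $\mathsf{eo}$ is defined as a supremum and running your construction in each component yields $\mathsf{eo}(C(K))\geqslant\lambda_i$ for every $i$.
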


\begin{proof}For a Radon measure $\mu$ on a compact space, let $\lambda = \mathsf d(L_1(\mu))$. The Hilbert space $\ell_2(\lambda)$ embeds isometrically into $L_1(\mu)$. Indeed, by Maharam's theorem (\cite[Theorem 331L]{fremlin}), $L_1(\mu)$ is isometric to $L_1(\{0,1\}^\lambda)$. (Here, $L_1(\{0,1\}^\lambda)$ denotes the $L_1$-space with respect to the Haar measure on the Cantor group $\{0,1\}^\lambda$.) Consequently, we can find a~collection of $\lambda$ many independent Gaussian random variables with mean zero and the same variance; their linear span is isometric to $\ell_2(\lambda)$. \medskip

Thus, by simple duality, $\ell_2(\lambda)$ is a quotient of $C(K)$. Since $\mathsf{eo}(\ell_2(\lambda))=\lambda$ (any orthonormal basis is a witness of this fact), by Proposition~\ref{lifting}, the conclusion follows.\end{proof}

\begin{remark}The conclusion of Proposition~\ref{maharam} extends to the class of all non-reflexive Grothendieck spaces. (A Banach space $X$ is \emph{Grothendieck} if each bounded, linear operator $T\colon X\to c_0$ is weakly compact.) Indeed, Haydon (\cite[Theorem~1]{haydon}) proved that the dual space of every non-reflexive Grothendieck space contains an~isometric copy of $L_1(\{0,1\}^{\omega_1})$, so in this case we argue exactly as in the proof of Proposition~\ref{maharam} in order to produce an~uncountable $(1+\varepsilon)$-separated subset of the unit sphere of $X$; here $\varepsilon$ can be taken to be arbitraily close to $\sqrt{2}-1$. \end{remark}

If $K$ is a Rosenthal compact space (that is, $K$ is homeomorphic to a compact subset of the space of first-class Baire functions on a Polish space endowed with the topology of point-wise convergence), then for each Radon measure $\mu$ on $K$, the space $L_1(\mu)$ is separable (\cite[Theorem~2]{todor}). One may then wonder whether a potential counter-example should fall into this class. This is however not the case. Indeed, by Proposition~\ref{hs}, any counter-example $K$ must be hereditarily separable, so if $K$ is also Rosenthal compact, by \cite[Theorem 4]{todor}, it must contain a copy of the split interval. An appeal to Proposition~\ref{td} yields the following conclusion.

\begin{proposition}Suppose that $K$ is a non-metrisable, Rosenthal compact space. Then $\mathsf{eo}(C(K))$ is uncountable. \end{proposition}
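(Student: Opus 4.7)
The plan is to derive this from the preceding propositions by contradiction, essentially following the dichotomy sketched in the paragraph just above the statement. Assume towards a contradiction that $\mathsf{eo}(C(K))$ is countable. Then Proposition~\ref{hs} forces $K$ to be hereditarily separable, since otherwise we already produced an uncountable $2$-separated family in $S_{C(K)}$.

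Now invoke Todor\v{c}evi\'c's structural result \cite[Theorem~4]{todor}: every hereditarily separable Rosenthal compact space contains a homeomorphic copy of the split interval $\mathbb{S}$ (the double arrow space). The key feature of $\mathbb{S}$ for our purposes is that it is compact, non-metrisable, and totally disconnected — the standard basis of half-open lex-intervals in $\mathbb{S}$ is clopen, so $\mathbb{S}$ has a base of clopen sets. In particular $\mathbb{S}$ embeds as a closed subspace of $K$, and $\mathsf{d}(C(\mathbb{S}))=w(\mathbb{S})\geq \omega_1$ because $\mathbb{S}$ is non-metrisable and compact.

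Finally, apply Proposition~\ref{td} to the closed totally disconnected subspace $\mathbb{S}\subseteq K$: it yields
\[
\mathsf{eo}(C(K))\geq \mathsf{d}(C(\mathbb{S}))\geq \omega_1,
\]
contradicting our assumption that $\mathsf{eo}(C(K))$ was countable. Hence $\mathsf{eo}(C(K))$ is uncountable, as claimed.

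The proof has no real obstacle of its own; all the work is shouldered by the two cited external facts (Todor\v{c}evi\'c's theorem on Rosenthal compacta and the clopen/equilateral construction packaged in Proposition~\ref{td}) together with Proposition~\ref{hs}. The only point worth double-checking is that the copy of the split interval produced by Todor\v{c}evi\'c's theorem is genuinely closed in $K$ (automatic, since $\mathbb{S}$ is compact and $K$ is Hausdorff) and that $\mathbb{S}$ is totally disconnected, which is immediate from the usual description of its topology.
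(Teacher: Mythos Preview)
Your proof is correct and follows exactly the route sketched in the paper: reduce to the hereditarily separable case via Proposition~\ref{hs}, invoke Todor\v{c}evi\'c's theorem to find a copy of the split interval, and finish with Proposition~\ref{td}. One small imprecision worth fixing: Todor\v{c}evi\'c's result needs the non-metrisability hypothesis (which you have), so the sentence should read ``every \emph{non-metrisable} hereditarily separable Rosenthal compactum contains a copy of the split interval''---as you phrased it the claim is false for, say, $[0,1]$.
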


We say that $K$ is \emph{perfectly normal}, if each closed subset of $K$ is $\EuScript{G}_\delta$. Thus, if $U$ is an open subset of a perfectly normal space $K$, then there exists a norm-one function $f\in C(K)$ such that $f(t)>0$ for $t\in U$ and $f(t)=0$ otherwise. The following fact is a~part of \cite[Theorem 2]{mv}.

\begin{proposition}\label{lemmamv}Suppose that $K$ is not perfectly normal. Then the unit sphere of $C(K)$ contains an uncountable $2$-separated subset. In particular, $\mathsf{eo}(C(K))$ is uncountable. \end{proposition}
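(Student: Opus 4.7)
The plan is to exploit the failure of perfect normality by a transfinite recursion of length $\omega_1$. Pick a closed set $F\subseteq K$ that is not a $G_\delta$-set; such $F$ exists by hypothesis. I shall construct a sequence $(x_\alpha, f_\alpha)_{\alpha<\omega_1}$ with $x_\alpha\in K\setminus F$ and $f_\alpha\in C(K,[-1,1])$ satisfying $f_\alpha|_F\equiv -1$, $f_\alpha(x_\alpha)=1$, and $f_\beta(x_\alpha)=-1$ for every $\beta<\alpha$. Once this is done, each $f_\alpha$ automatically lies in the unit sphere, and for $\beta<\alpha$ we obtain
$$\|f_\alpha-f_\beta\|_\infty\geq |f_\alpha(x_\alpha)-f_\beta(x_\alpha)|=|1-(-1)|=2,$$
so $(f_\alpha)_{\alpha<\omega_1}$ is the desired uncountable $2$-separated family.

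The heart of the argument is the recursive step. Suppose $(x_\beta,f_\beta)_{\beta<\alpha}$ has been defined for some countable $\alpha<\omega_1$. For each $\beta<\alpha$ the fibre $Z_\beta:=f_\beta^{-1}(\{-1\})$ is a closed $G_\delta$-subset of $K$ (the singleton $\{-1\}$ is a $G_\delta$ in $\R$ and $f_\beta$ is continuous) and contains $F$ by construction. Because $\alpha$ is countable, $Z:=\bigcap_{\beta<\alpha}Z_\beta$ is still a $G_\delta$-set containing $F$. Since $F$ itself is \emph{not} $G_\delta$, the inclusion $F\subseteq Z$ must be strict, so we may choose $x_\alpha\in Z\setminus F$; this choice forces $f_\beta(x_\alpha)=-1$ for every $\beta<\alpha$. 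Next, as $K$ is compact Hausdorff (hence normal), Urysohn's lemma applied to the disjoint closed sets $\{x_\alpha\}$ and $F$ yields a function $f_\alpha\in C(K,[-1,1])$ with $f_\alpha(x_\alpha)=1$ and $f_\alpha|_F\equiv -1$. The recursion proceeds.

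There is no genuine obstacle beyond the single observation that $G_\delta$-ness is preserved under countable intersections, which is exactly why the recursion can advance through every countable stage without stabilising at $F$. The ``in particular'' clause is immediate: the $2$-separated family produced above witnesses $\mathsf{eo}(C(K))\geq\omega_1$.
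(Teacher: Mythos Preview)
Your argument is correct. The paper does not supply a proof of this proposition; it merely records the statement and attributes it to \cite[Theorem~2]{mv}. Your transfinite recursion is the natural (and essentially the standard) way to establish the result: the only point that matters is that the zero-set $f_\beta^{-1}(\{-1\})$ of a continuous real-valued function is automatically a closed $G_\delta$, and that $G_\delta$-ness survives countable intersections, so at each countable stage the set $Z=\bigcap_{\beta<\alpha}f_\beta^{-1}(\{-1\})$ properly contains the non-$G_\delta$ set $F$. One cosmetic remark: the base case $\alpha=0$ is covered by your inductive step as written (the empty intersection is $K$, which is certainly $G_\delta$), but you might mention this explicitly for clarity.
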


\begin{definition}We say that a continuous function $f\colon K\to \mathbb{R}$ is \emph{locally sign-changing} if for each non-isolated point $z\in K$ with $f(z)=0$ the function $f$ takes both positive and negative values in every neighbourhood of $z$.\end{definition}
\noindent
For instance, if $K=[-1,1]$, then $f(x)=\sin x$ is a locally sign-changing function whereas $g(x)=|x|$ is not.

\begin{lemma}\label{paranormal}Suppose that $K$ is perfectly normal. Then for each pair of distinct points $x,y\in K$ there exists a norm-one, locally sign-changing function $f\in C(K)$ such that $f(t)=1$ in some neighbourhood of $x$ and $f(t)=-1$ in some neighbourhood of $y$.\end{lemma}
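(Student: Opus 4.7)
The plan is to write $f$ as a difference $f = f_+ - f_-$ of two nonnegative continuous functions whose strict positivity sets are disjoint open subsets of $K$ whose closures cover $K$; this will force the zero set of $f$ to lie in $\bar U_+ \cap \bar U_-$ and yield the locally sign-changing property for free. First I would use the compact Hausdorff (and hence $T_3$) structure of $K$ to choose an open neighbourhood $U_+$ of $x$ with $y \notin \bar U_+$, and set $U_- := K \setminus \bar U_+$. Then $U_+, U_-$ are disjoint open subsets of $K$ containing $x$ and $y$, respectively, and $\bar U_+ \cup \bar U_- = \bar U_+ \cup (K \setminus \mathrm{int}\,\bar U_+) = K$. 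Shrinking once more by regularity, I would fix open neighbourhoods $W_x \ni x$ and $W_y \ni y$ with $\bar W_x \subset U_+$ and $\bar W_y \subset U_-$.

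The crux is to invoke perfect normality, which provides, for each closed $C \subseteq K$, a continuous $\varphi\colon K \to [0,1]$ with $\varphi^{-1}(0) = C$ (since $K \setminus C$ is $F_\sigma$, write $K \setminus C = \bigcup_n F_n$ with $F_n$ closed and set $\varphi = \sum_n 2^{-n} u_n$, where $u_n$ is a Urysohn function separating $C$ from $F_n$). Applying this to $C = K \setminus U_+$ yields $\varphi_+$; combining it with a Urysohn function $u_+$ that equals $1$ on $\bar W_x$ and $0$ on $K \setminus U_+$, the function $f_+ := \max(u_+, \varphi_+)\colon K \to [0,1]$ satisfies $f_+ \equiv 1$ on $\bar W_x$ and $f_+^{-1}(0) = K \setminus U_+$. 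An analogous $f_-$ is produced for $y$ using the pair $\bar W_y \subset U_-$.

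Setting $f := f_+ - f_-$, the verification is routine: $f \equiv 1$ on $W_x$ (since $f_+ \equiv 1$ there, while $f_-$ vanishes on $\bar W_x \subset U_+ \subset K \setminus U_- = f_-^{-1}(0)$), and symmetrically $f \equiv -1$ on $W_y$; the disjointness $U_+ \cap U_- = \varnothing$ prevents $f_+$ and $f_-$ from being simultaneously positive, so $f(z)=0$ forces $f_+(z)=f_-(z)=0$, whence $f^{-1}(0) = (K \setminus U_+) \cap (K \setminus U_-) = \bar U_+ \cap \bar U_-$; and for any non-isolated $z$ in this common boundary, every neighbourhood of $z$ meets both $U_+$ (where $f>0$) and $U_-$ (where $f<0$), delivering the locally sign-changing condition. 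The principal obstacle I anticipate is the strict zero-set control on $f_\pm$: classical Urysohn alone would merely guarantee $f_\pm$ vanishing on $K \setminus U_\pm$, possibly with additional zeros inside $U_\pm$, which could produce non-isolated zeros of $f$ on an open subset of $U_+$ (or $U_-$) with no sign change nearby---precisely the pathology that perfect normality is invoked to rule out.
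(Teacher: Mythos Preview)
Your approach is sound and arguably cleaner than the paper's, but there is one slip in the zero-set computation that, as written, breaks the final step. You assert $f^{-1}(0) = (K \setminus U_+) \cap (K \setminus U_-) = \bar U_+ \cap \bar U_-$. The first equality is correct; the second is not in general: since $K \setminus U_- = \bar U_+$, you actually get $f^{-1}(0) = \bar U_+ \setminus U_+ = \partial U_+$, whereas $\bar U_+ \cap \bar U_- = \bar U_+ \setminus \mathrm{int}(\bar U_+)$. These coincide precisely when $U_+$ is regular open. If it is not, a point $z \in \mathrm{int}(\bar U_+) \setminus U_+$ satisfies $f(z)=0$ yet has a neighbourhood disjoint from $U_-$, so $f$ is nonnegative there and the locally sign-changing condition fails (concretely: $K=[0,1]$, $U_+=(\tfrac14,\tfrac12)\cup(\tfrac12,\tfrac34)$, $z=\tfrac12$). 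The fix is immediate: after choosing $U_+$, replace it by $\mathrm{int}(\bar U_+)$, which is regular open, still contains $x$, and has the same closure (so $y$ is still excluded). With that adjustment your argument goes through verbatim.

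Modulo this patch, your route is genuinely different from the paper's. The paper begins with an arbitrary Urysohn function $g$ satisfying the boundary conditions and then performs two successive repairs using perfect normality: first it adds a small nonnegative function supported on the interior of $g^{-1}(0)$ to obtain $w$ with nowhere-dense zero set; then it identifies zeros of $w$ near which $w$ is one-signed and adds small corrective bumps $w^+,w^-$ to force sign changes there. Your construction bypasses the repair stage entirely by using perfect normality up front to build $f_+,f_-$ with \emph{prescribed} exact zero sets, so that $f^{-1}(0)$ is forced to be the boundary of a regular open set, where both signs are automatically present in every neighbourhood. Your argument is more direct and conceptually transparent; the paper's is more ad hoc but places no regularity requirement on the auxiliary open set.
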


\begin{proof}Let $g\in C(K)$ be a norm-one function such that $g(t)=1$ in some neighbourhood of $x$ and $g(t)=-1$ in some neighbourhood of $y$. If $g^{-1}(\{0\})$ has non-empty interior, by perfect normality of $K$, let us choose a norm-one function $h$ that is strictly positive on ${\rm int}\,g^{-1}(\{0\})$ and 0 otherwise. If the interior is already empty, take $h=0$. \medskip

Let $w=g+h$; then $w^{-1}(\{0\})$ has empty interior (in particular, it contains no isolated points). Consider all points $z\in w^{-1}(\{0\})$ such that $0\leqslant w(t)< 1/4$ for $t\in U^+_z$, where $U^+_z$ is some open neighbourhood of $z$, and let $U^+$ be the union of all such sets $U^+_z$. Take a~function $w^+$ of norm at most 3/4 that is strictly positive in $U^+$ and zero otherwise. \medskip

Similarily, consider all points $z\in w^{-1}(\{0\})$ such that $-1/4< w(t)\leqslant 0$ for $t\in U^-_z$, where $U^-_z$ is some open neighbourhood of $z$, and let $U^-$ be the union of all these sets $U^-_z$. Take a~function $w^-$ of norm at most 3/4 that is strictly negative in $U^-$ and zero otherwise.\medskip

Finally, observe that $f = w+w^+ + w^-$ is a norm-one, locally sign-changing function with the desired properties.\end{proof}\medskip

We prove that for $C(K)$-spaces with $K$ perfectly normal, the number $\mathsf k(C(K))$ is as large as possible, that is, it is equal to $d(C(K))$.

\begin{theorem}\label{weight}Suppose that $K$ is perfectly normal. Then $\mathsf k(C(K)) = \mathsf{d}(C(K))$. Moreover, if $K$ does not have any isolated points, then the unit sphere of $C(K)$ contains a~{\rm (1+)}-separated subset of cardinality $\mathsf{d}(C(K))$ that consists of locally sign-changing functions. \end{theorem}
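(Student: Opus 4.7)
The upper bound $\mathsf{k}(C(K)) \leq \mathsf{d}(C(K))$ is recorded in Section~\ref{prelim}; the task is to exhibit a $(1+)$-separated family of size $\mathsf{d}(C(K))$ in $S_{C(K)}$, consisting of locally sign-changing functions when $K$ has no isolated points. In the metrisable case $\mathsf{d}(C(K)) = \omega$ and Kottman's theorem supplies a countable $(1+)$-separated family, which, absent isolated points, can be rendered locally sign-changing by a small perturbation. Thus assume $K$ is non-metrisable, whence $\kappa := \mathsf{d}(C(K)) = w(K) \geq \omega_1$.

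I construct by transfinite recursion on $\alpha < \kappa$ locally sign-changing unit vectors $f_\alpha \in C(K)$ together with distinct points $x_\alpha \neq y_\alpha \in K$ and disjoint open neighbourhoods $V_\alpha \ni x_\alpha$, $W_\alpha \ni y_\alpha$ on which $f_\alpha \equiv 1$ and $f_\alpha \equiv -1$ respectively; Lemma~\ref{paranormal} supplies $f_\alpha$ once the pair $(x_\alpha, y_\alpha)$ is in place. To secure $\|f_\alpha - f_\beta\| > 1$ for every $\beta < \alpha$, it suffices to arrange that for each such $\beta$, either $f_\alpha(y_\beta) \geq 0$ or $f_\alpha(x_\beta) \leq 0$. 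Indeed, if $f_\alpha(y_\beta) > 0$ then $|f_\alpha(y_\beta) - f_\beta(y_\beta)| = f_\alpha(y_\beta) + 1 > 1$; if $f_\alpha(y_\beta) = 0$, then $y_\beta$ being non-isolated (as $K$ has no isolated points) together with $f_\alpha$ being locally sign-changing yields $z \in W_\beta$ near $y_\beta$ with $f_\alpha(z) > 0$, and so $|f_\alpha(z) - f_\beta(z)| = f_\alpha(z) + 1 > 1$. The case $f_\alpha(x_\beta) \leq 0$ is symmetric.

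The main obstacle is the topological selection at each stage: choose $(x_\alpha, y_\alpha)$ so that, for every $\beta < \alpha$, the pair $(x_\beta, y_\beta)$ avoids the forbidden configuration $f_\alpha(x_\beta) > 0 \wedge f_\alpha(y_\beta) < 0$ with respect to the function $f_\alpha$ produced by Lemma~\ref{paranormal}. Perfectly normal compact $K$ is hereditarily Lindel\"of and hence CCC, so the naive approach of taking the $V_\alpha$'s pairwise disjoint is unavailable for $\kappa \geq \omega_1$. The key leverage is the cardinality gap $2|\alpha| < \kappa = w(K)$: the open sets $\{f_\beta > 0\}$, $\{f_\beta < 0\}$ for $\beta < \alpha$, together with their finite Boolean combinations, have cardinality less than $\kappa$ and hence fail to constitute a base of $K$. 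Combining this fresh open structure with perfect normality (applied through Lemma~\ref{paranormal}) one extracts a compatible pair $(x_\alpha, y_\alpha)$ and the corresponding $f_\alpha$. After $\kappa$ stages the required family is produced; the case of $K$ with isolated points is handled by an analogous but simpler construction, the isolated points themselves giving rise to clopen sets that furnish ample room for the recursion without requiring the locally sign-changing refinement.
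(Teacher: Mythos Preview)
Your recursion has the right architecture but a genuine gap at the selection step. You propose to ensure $\|f_\alpha-f_\beta\|>1$ by controlling the values of the \emph{new} function $f_\alpha$ at the \emph{old} points $x_\beta,y_\beta$: you need, for each $\beta<\alpha$, that $f_\alpha(y_\beta)\geqslant 0$ or $f_\alpha(x_\beta)\leqslant 0$. But Lemma~\ref{paranormal} gives you no information about $f_\alpha$ away from small neighbourhoods of $x_\alpha,y_\alpha$; it is a black box producing \emph{some} locally sign-changing function with the prescribed plateaus. Your ``cardinality gap'' observation---that the cozero sets of the $f_\beta$'s and their finite Boolean combinations are too few to form a base---says something about the old functions, not about the values of a yet-to-be-constructed new one at $|\alpha|$ prescribed points. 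The sentence ``combining this fresh open structure with perfect normality one extracts a compatible pair'' is precisely where a proof is needed and none is given.

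The paper avoids this difficulty by reversing the roles. One takes a \emph{maximal} $(1+)$-separated family $\{f_i\}$ of locally sign-changing unit vectors and shows it must separate points: if $x\neq y$ are not separated, Lemma~\ref{paranormal} yields $f$ with $f\equiv 1$ near $x$, $f\equiv-1$ near $y$; since $f_i(x)=f_i(y)$ for every $i$, evaluating $f-f_i$ at the \emph{new} points $x,y$ (where both $f$ and $f_i$ are under control) gives $\|f-f_i\|>1$, contradicting maximality. Stone--Weierstrass then forces $|I|\geqslant \mathsf{d}(C(K))$. Your recursion can be salvaged along exactly these lines: at stage $\alpha$ choose $x_\alpha\neq y_\alpha$ with $f_\beta(x_\alpha)=f_\beta(y_\alpha)$ for all $\beta<\alpha$ (such a pair exists because $|\alpha|<\mathsf{d}(C(K))$ and Stone--Weierstrass would otherwise give a dense subalgebra of smaller density), and verify $\|f_\alpha-f_\beta\|>1$ by evaluating at $x_\alpha,y_\alpha$. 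That is the paper's argument recast as a recursion.

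Two smaller points. Your reduction to the perfect case (``isolated points handled by an analogous but simpler construction'') is also hand-waved; the paper does this cleanly via the Cantor--Bendixson decomposition $K=L\cup S$ with $L$ perfect and $S$ scattered, noting that perfect normality implies hereditary Lindel\"ofness, so $S$ is countable and $w(L)=w(K)$; one then passes to $L$ via Proposition~\ref{subspaces2}. And in the metrisable case your claim that a Kottman sequence ``can be rendered locally sign-changing by a small perturbation'' is not obvious either---a small perturbation of a $(1+)$-separated set need not remain $(1+)$-separated.
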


\begin{proof}Recall that $w(K) = \mathsf{d}(C(K))$. Let us first note that we may reduce the situation to the case where $K$ does not have any isolated points. Indeed, every space decomposes uniquely into a union of a perfect set and a scattered one (one of them may be empty; see, \emph{e.g.}, \cite[Theorem 3 in \S 9]{kuratowski}), so let us write $K=L\cup S$, where $L$ is perfect (hence closed) and $S$ is scattered. As $K$, being perfectly normal, is hereditarily Lindel\"{o}f, for each ordinal $\xi$ the complement $K\setminus K^{(\xi)}$ of the $\xi^{{\rm th}}$-derived subset of $K$ is countable and $L=K^{(\eta)}$ for some countable ordinal $\eta$. Consequently, $$S=K\setminus L = K \setminus \Bigl(\bigcap_{\xi\leqslant\eta}K^{(\xi)}\Bigl)=\bigcup_{\xi\leqslant\eta}K\setminus K^{(\xi)}$$ is countable as a union of countably many  countable sets. Being an open subset of a~compact space, $S$ is locally compact. As $S$ is countable, it is metrisable so $w(S)=\omega$. As $w(K)=\max\{w(L), w(S)\}$, we infer that $L$ has the same weight as $K$. In the light of Proposition~\ref{subspaces2}, we may then suppose that $K$ does not have any isolated points.\medskip

Assume, in search of contradiction, that each (1+)-separated family consisting of norm-one, locally sign-changing functions has cardinality strictly less than $w(C(K))$ and take one, $\{f_i\colon i\in I\}$ say, that is maximal (with respect to inclusion) subject to these conditions. \medskip

We \emph{claim} that the the family $\{f_i\colon i\in I\}$ separates points in $K$. Once it is proved, the Stone--Weierstrass theorem will immediately yield that the algebra $\mathscr A$ generated by $\{f_i\colon i\in I\}$ is dense in $C(K)$. However, the density of $\mathscr A$ is at most $|I|<w(K)$, which will ultimately lead to a contradiction.\medskip

Assume that the functions $\{f_i\colon i\in I\}$ do not separate points in $K$. Thus, for some pair of distinct points $x,y\in K$ and every $i\in I$ we have $f_i(x)=f_i(y)$. By Lemma~\ref{paranormal}, we may find a norm-one, locally sign-changing function $f\in C(K)$ such that $f(t)=1$ in some neighbourhood of $x$ and $f(t)=-1$ in some neighbourhood of $y$. Fix any $i\in I$. If $f_i(x)\neq 0$, then we have
$$\|f-f_i\|\geqslant \max\bigl\{ |1-f_i(x)|, |-1-f_i(x)|\bigr\} > 1.$$
Otherwise (that is, if $f_i(x)= 0$), we use the fact that $f_i$ is locally sign-changing to pick a~point $t$ in the neighbourhood of $x$ where $f$ takes value 1 such that $f_i(t)<0$. Thus
$$\|f-f_i\|\geqslant |1 -f_i(t)|>1.$$
This is a contradiction with the maximality of $\{f_i\colon i\in I\}$. \end{proof}

It is a tantalising problem whether the hypothesis of perfect normality in Theorem~\ref{weight} may be dropped.

\subsection{Locally compact Hausdorff spaces}
We will employ Theorem~B to obtain another class of spaces for which the cardinal function $\mathsf{k}$ assumes uncountable values.

\begin{proposition}Let $K$ be a non-separable, locally compact, Hausdorff space. Then $\mathsf{k}(C_0(K))$ is uncountable. \end{proposition}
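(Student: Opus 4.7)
The plan is to argue by contradiction in the style of the Introduction's proof for $c_0(\omega_1)$, replacing the explicit formula there by a Tietze-extension argument in the one-point compactification $K^+=K\cup\{\infty\}$. I would assume that $\mathsf{k}(C_0(K))\leq\omega$ and fix a family $\{f_n\}_{n\in\mathbb{N}}\subseteq S_{C_0(K)}$ that is maximal among the $(1+)$-separated subsets of $S_{C_0(K)}$. Each $f_n$ attains its norm on $K$, since it extends continuously to $K^+$ with $f_n(\infty)=0$ and $K^+$ is compact; after flipping signs where necessary, I would pick $q_n\in K$ with $f_n(q_n)=1$, and then enumerate the distinct values occurring in the sequence $(q_n)$ as $\{\widetilde{q}_k\colon k\in I\}$ with $I\subseteq\mathbb{N}$.

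Since $K$ is non-separable whereas the closure $S=\overline{\{\widetilde{q}_k\colon k\in I\}}$ is separable, the set $K\setminus S$ is non-empty, and I would pick $p\in K\setminus S$. On the closed set $F=S\cup\{p,\infty\}\subseteq K^+$ I would define $f\colon F\to[-1,1]$ by $f(\widetilde{q}_k)=-1/k$, $f(p)=1$, $f(\infty)=0$, and $f(\ell)=0$ for $\ell\in S\setminus\{\widetilde{q}_k\colon k\in I\}$. Provided $f$ is continuous on $F$, Tietze's extension theorem applied in the compact normal space $K^+$ delivers a continuous extension $\widetilde{f}\in C(K^+)$ with $\|\widetilde{f}\|=1$ and $\widetilde{f}(\infty)=0$, so that $\widetilde{f}\in S_{C_0(K)}$. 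For each $n$, letting $k_n\in I$ satisfy $\widetilde{q}_{k_n}=q_n$, I would obtain $|\widetilde{f}(q_n)-f_n(q_n)|=1+1/k_n>1$, whence $\widetilde{f}$ contradicts the maximality of $\{f_n\}$.

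The principal technical obstacle is verifying continuity of $f$ on $F$. Continuity at $p$ and at points $\widetilde{q}_k$ isolated in $F$ is automatic. At a cluster point $\ell\in S\setminus\{\widetilde{q}_k\colon k\in I\}$, any net $\widetilde{q}_{k_\alpha}\to\ell$ must have $k_\alpha\to\infty$ in $\mathbb{N}$: since the $\widetilde{q}_k$'s are distinct and $\ell\neq\widetilde{q}_k$ for every $k$, Hausdorffness yields, for each $N$, a neighbourhood of $\ell$ excluding $\widetilde{q}_1,\dots,\widetilde{q}_N$; hence $f(\widetilde{q}_{k_\alpha})=-1/k_\alpha\to 0=f(\ell)$. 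At $\infty$, for every $N\in\mathbb{N}$ one may choose a compact $K_0\subseteq K$ containing $p$ and $\{\widetilde{q}_1,\ldots,\widetilde{q}_N\}$; then $K^+\setminus K_0$ is a neighbourhood of $\infty$ on which $|f|\leq 1/(N+1)$, and letting $N\to\infty$ completes the continuity check.
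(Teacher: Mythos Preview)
Your argument has a genuine gap in the continuity verification for the function $f$ on $F$. You check continuity at $p$, at points $\widetilde q_k$ that happen to be isolated in $F$, at limit points $\ell\in S\setminus\{\widetilde q_k:k\in I\}$, and at $\infty$. But you never address the case where some $\widetilde q_k$ is itself a cluster point of the set $\{\widetilde q_j:j\in I\}$. In that situation there is a net $\widetilde q_{j_\alpha}\to\widetilde q_k$ with $j_\alpha\to\infty$ (by the same Hausdorff argument you use for $\ell$), and then $f(\widetilde q_{j_\alpha})=-1/j_\alpha\to 0$, whereas $f(\widetilde q_k)=-1/k\neq 0$. So $f$ is discontinuous at $\widetilde q_k$ and the Tietze extension step collapses. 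Nothing in your setup prevents this: for instance, the points $q_n$ could be the terms of a convergent sequence together with its limit.

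A secondary issue is the ``flipping signs'' step: replacing some $f_n$ by $-f_n$ destroys the $(1+)$-separation of the family, so after the flip you no longer know that the family is maximal, and the final contradiction (which requires $\|\widetilde f - f_n\|>1$ for the \emph{original} $f_n$) does not follow from the estimate at $q_n$ when $f_n(q_n)=-1$. This is easily repaired by taking, as in the Introduction, a family that is maximal among $(1+)$-separated sets of unit vectors that \emph{attain the value} $1$; but the continuity gap above is not so easily patched.

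The paper avoids the continuity problem altogether by splitting into two cases. If $K$ contains a non-metrisable compact subset $L$, Theorem~B gives an uncountable $(1+)$-separated set in $C(L)$, which lifts to $C_0(K)$ via Tietze--Urysohn. Otherwise every compact subset of $K$ is second-countable; then one takes a maximal $(1+)$-separated family $\{f_n\}$ of unit vectors attaining the value $1$, observes that $F=\bigcup_n\{f_n\neq 0\}$ is $\sigma$-compact and hence separable, picks $x_0\notin\overline F$, and sets $f=h-g$ where $g=\sum_k 2^{-k}|f_k|$ and $h\in C_0(K)$ is a norm-one function with $h(x_0)=1$ and $h\vert_{\overline F}=0$. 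Here $g$ is continuous as a uniform limit of continuous functions, so no delicate pointwise prescription is needed, and one gets $\|f-f_n\|\geq |g(q_n)|+f_n(q_n)>1$ directly.
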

\begin{proof}Consider first the case where $K$ contains a non-metrisable, compact subset $L$. Then by Theorem~B, $\mathsf{k}(C(L))>\omega$. Using the Tietze--Urysohn extension theorem for locally compact spaces, we may extend each function $f\in C(L)$ to a function from $C_0(K)$ preserving the norm. Thus, $\mathsf k(C_0(K))$ is uncountable. \medskip

Having eliminated the case where there exists a compact, non-metrisable subset of $K$, we arrive at the case where each compact subset of $K$ is second-countable, hence separable. Assume that each (1+)-separated subset of the unit sphere of $C_0(K)$ is countable. Similarly as in the Introduction, take a set $\{f_n\colon n\in \mathbb{N}\}$ that is maximal among all (1+)-separated subsets of the unit sphere of $C_0(K)$ consisting of functions assuming the value $1$. Define
$$F=\{x\in K\colon f_n(x)\not=0\mbox{ for some }n\in\N\};$$ this is clearly a $\sigma$-compact subset of $K$. As $K$ is non-separable and its compact subsets are second-countable, $F$ is separable and hence, $\overline{F}$ is a proper subset of $K$. Consider the function $$g(x) = \sum_{k=1}^\infty \frac{|f_k(x)|}{2^k}\quad (x\in K).$$
We have $g(x)=0$ for each $x\not\in\oo{F}$. Pick $x_0\in K\setminus \overline{F}$ and choose a norm-one function $h\in C_0(K)$ such that $h(x_0)=1$ and $h(x)=0$ for all $x\in \overline{F}$. Let $f = h - g$. Then $\|f\|=1$ and $\|f-f_n\|>1$ for all $n\in \mathbb{N}$; a contradiction with the maximality of $\{f_n\colon n\in \mathbb N\}$.\end{proof}

Unlike the compact case, if $K$ is a large enough discrete space then $\mathsf{k}(C_0(K))<\mathsf{d}(C_0(K))$ (of course, $C_0(K) = c_0(|K|)$). Indeed, note that $\mathsf d(c_0(\mathfrak{c}^+)) = \mathfrak{c}^+$, however, the following inequality holds true (which is due to P.~Koszmider):

\begin{proposition}$\mathsf k (c_0(\mathfrak{c}^+)) \leqslant \mathfrak{c}.$  \end{proposition}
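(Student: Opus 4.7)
The plan is a straightforward $\Delta$-system plus cardinal-arithmetic pigeonhole argument. Suppose, towards a contradiction, that there is a $(1+)$-separated family $\EuScript{A}=\{f_\alpha\colon\alpha<\mathfrak{c}^+\}$ of unit vectors in $c_0(\mathfrak{c}^+)$. Since every vector in $c_0(\mathfrak{c}^+)$ has countable support, the sets $S_\alpha:=\{\gamma<\mathfrak{c}^+\colon f_\alpha(\gamma)\neq 0\}$ are all countable.

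First I would apply the $\Delta$-system lemma in its generalised form to the family $\{S_\alpha\colon\alpha<\mathfrak{c}^+\}$: as $\mathfrak{c}^+$ is regular and $\mu^{\aleph_0}\leq\mathfrak{c}^{\aleph_0}=\mathfrak{c}<\mathfrak{c}^+$ for every cardinal $\mu<\mathfrak{c}^+$, the hypotheses of the generalised $\Delta$-system lemma (\emph{cf.} \cite[Theorem 1.5]{kunen}) are met and one extracts a $\Delta$-subsystem of full size $\mathfrak{c}^+$. After relabelling, we may assume that there is a fixed countable set $\Delta\subset\mathfrak{c}^+$ with $S_\alpha\cap S_\beta=\Delta$ whenever $\alpha\neq\beta$.

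Next I would pigeonhole on the restrictions to the root. Each $f_\alpha\vert_\Delta$ is an element of $[-1,1]^\Delta$, a set of cardinality $\mathfrak{c}^{\aleph_0}=\mathfrak{c}<\mathfrak{c}^+=|\EuScript{A}|$, so by pigeonhole there exist two distinct indices $\alpha\neq\beta$ with $f_\alpha\vert_\Delta=f_\beta\vert_\Delta$. It then remains to estimate $\|f_\alpha-f_\beta\|$: at coordinates $\gamma\in\Delta$ the functions agree and contribute nothing; at coordinates $\gamma\notin\Delta$ the $\Delta$-system condition forces $\gamma$ to belong to at most one of $S_\alpha,S_\beta$, so at most one of $f_\alpha(\gamma),f_\beta(\gamma)$ is non-zero, and since both vectors have norm $1$ this gives $|f_\alpha(\gamma)-f_\beta(\gamma)|\leq 1$. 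Hence $\|f_\alpha-f_\beta\|\leq 1$, contradicting the $(1+)$-separation of $\EuScript{A}$.

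There is no genuinely hard step here; the essential point is the cardinal inequality $\mathfrak{c}^{\aleph_0}<\mathfrak{c}^+$, which is exactly the gap that is absent in $c_0(\omega_1)$ (where $\mathfrak{c}^{\aleph_0}=\mathfrak{c}\geq\omega_1$) and explains why the uncountable $(1+)$-separated construction of the Introduction for $c_0(\omega_1)$ cannot simply be ``upscaled''. The only subtlety worth double-checking is the verification that the generalised $\Delta$-system lemma applies to $\mathfrak{c}^+$, which is routine.
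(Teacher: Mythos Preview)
Your proof is correct and follows essentially the same route as the paper's: apply the generalised $\Delta$-system lemma to the countable supports, then use that there are only $\mathfrak{c}$ many restrictions to the countable root $\Delta$ to find two functions whose difference has norm at most $1$. The only cosmetic difference is that the paper phrases the final step contrapositively (observing that $|f(\alpha)-g(\alpha)|>1$ forces $\alpha\in\Delta$), and a minor point: the generalised $\Delta$-system lemma you need is \cite[Theorem~1.6]{kunen}, not Theorem~1.5.
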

\begin{proof}Assume that there exists a (1+)-separated family of unit vectors $\EuScript A\subset c_0(\mathfrak{c}^+)$ with $|\EuScript A| = \mathfrak{c}^+$. Consider the family of supports of elements of $\EuScript A$, that is, 
$$\mathscr{A}=\big\{ \{\alpha<\mathfrak{c}^+\colon f(\alpha)\neq 0\}\colon f\in \EuScript B\big\}.$$
Clearly, $|\mathscr{A}|=\mathfrak{c}^+$. By the generalised $\Delta$-system lemma (\cite[Theorem 1.6]{kunen}), there exist a subfamily $\mathscr{B}\subseteq \mathscr{A}$ with $|\mathscr{B}|=\mathfrak{c}^+$ and a countable set $\Delta$ such that $A\cap B = \Delta$ for all distinct $A,B\in \mathscr{B}$. Given $f,g\in \EuScript A$ with supports in $\mathscr{B}$, $|f(\alpha)-g(\alpha)|>1$ implies that $\alpha \in \Delta$. This is a contradiction because there are \emph{only} continuum many real-valued functions on $\Delta$ yet we have $\mathfrak{c}^+$ different functions in $\EuScript A$ with supports in $\mathscr{B}$.\end{proof}

\subsection*{Acknowledgements} We are indebted to Marek C\'{u}th and Yanqi Qiu for careful reading and detecting certain slips in the preliminary version of the manuscript. We wish to thank M.~C\'{u}th also for his most valuable remarks concerning Theorem A(iii) and his suggestion of extending this assertion to the class of WLD spaces. We then thank the referee for the~detailed and helpful report.


\begin{thebibliography}{99}
\bibitem{benyaministarbird} Y.~Benyamini and T.~Starbird, Embedding weakly compact sets into Hilbert space, \emph{Israel J. Math.} \textbf{23} (1976), no. 2, 137--141.
\bibitem{cy} P.~Civin and B.~Yood, Quasi-reflexive spaces, \emph{Proc. Amer. Math. Soc.} \textbf{8} (1957), 906--911.
\bibitem{davis_lindenstrauss} W.~J. Davis and J. Lindenstrauss, {On total nonnorming subspaces}, \emph{Proc. Amer. Math. Soc.}~{\bf 31} (1972), 109--111.
\bibitem{diestel} J.~Diestel, \emph{Sequences and series in Banach spaces}, Graduate Texts in Mathematics, Springer-Verlag 1984.
\bibitem {eltonodell} J.~Elton and E.~Odell, The unit ball of every infinite-dimensional normed linear space contains a ($1+\varepsilon$)-separated sequence, \emph{Colloq. Math.} \textbf{44} (1981), 105--109.

\bibitem{fabian_godefroy} M.~Fabian and G.~Godefroy, {The dual of every Asplund space admits a~projectional resolution of the identity}, \emph{Studia Math}.~\textbf{91} (1988), 144--151.
\bibitem{hajeketal} M. Fabian, P. Habala, P.~H\'ajek, V.~Montesinos and V. Zizler, \emph{Banach space theory: the basis for linear and nonlinear analysis}, Springer, New York, 2011.
\bibitem{freeman} D.~Freeman, E.~Odell, B. Sari and Th. Schlumprecht, Equilateral sets in uniformly smooth Banach spaces, \emph{Mathematika}, \textbf{60} (2014), 219--231.
\bibitem{fremlin} D.~H.~Fremlin. \emph{Measure theory}. Vol. 3. Torres Fremlin, Colchester, 2004.
\bibitem{gm} E.~Glakousakis and S.~K.~Mercourakis, On the existence of 1-separated sequences on the unit ball of a finite dimensional Banach space, \emph{Mathematika}, \textbf{61} (2015), 547--558.
\bibitem{gm2} E.~Glakousakis and S.~K.~Mercourakis, Examples of
infinite dimensional Banach spaces without infinite equilateral sets, preprint, 2015, \texttt{arXiv:1502.02500}, to appear in \emph{Serdica Math. J.}, \textbf{42} (2016).
\bibitem{hajek} P.~H\'ajek, Polynomials and injections of Banach spaces into superreflexive spaces, \emph{Archiv der Math.} \textbf{63}, (1994), 39--44.
\bibitem{hajek_book} P. H\'ajek, V.~Montesinos Santaluc\'{i}a, J.~Vanderwerff, V.~Zizler, \emph{Biorthogonal systems in Banach spaces}, Springer, New York, 2008.

\bibitem{haydon} R.~Haydon, An unconditional result about Grothendieck spaces, \emph{Proc. Amer. Math. Soc.} \textbf{100}. (1987), 511--516.

\bibitem{james} R.~C.~James, Super-reflexive spaces with bases, \emph{Pacific J. Math.} \textbf{41} (1972), 409--419.
\bibitem{jech} T.~Jech, \emph{Set Theory}, Springer Monographs in Mathematics, Springer-Verlag, Berlin 2003, The Third Millennium Edition, revised and expanded.
\bibitem{jl} W.~B. Johnson and J.~Lindenstrauss, Some remarks on weakly compactly generated Banach spaces, \emph{Israel J. Math.} \textbf{17} (1974), 219--230.
\bibitem{kakol} J. K\c{a}kol, W.~Kubi\'s and M.~L\'opez-Pellicer, \emph{Descriptive topology in selected topics of functional analysis}, Developments in Mathematics~{\bf 24}, Springer, New York--Dordrecht--Heidelberg--London~2011.
\bibitem{kosz} P.~Koszmider, Uncountable equilateral sets in Banach spaces of the form $C(K)$, preprint, 2015, \texttt{arXiv:1503.06356}.
\bibitem {kottman} C.~A. Kottman, Subsets of the unit ball that are separated by more than one, \emph{Studia Math.} \textbf{53} (1975), 15--27.
\bibitem{kryczkaprus} A.~Kryczka and S.~Prus, {Separated sequences in nonreflexive Banach spaces}, \emph{Proc. Amer. Math. Soc.}~{\bf 129} (2000), 155--163.
\bibitem{kubis} W.~Kubi\'s, Banach spaces with projectional skeletons, \emph{J.~Math. Anal. Appl.}~{\bf 350} (2009), 758--776. 

\bibitem{kunen} K.~Kunen, \emph{Set theory, an introduction to independence proofs}, North Holland, Amsterdam, 1980.
\bibitem{kuratowski}K.~Kuratowski, \emph{Topology}, vol.~1, Academic Press, New York-London, 1966.
\bibitem{lin} J.~Lindenstrauss, On nonseparable reflexive Banach spaces. \emph{Bull. Amer. Math. Soc.} \textbf{72} (1966), no. 6, 967--970.
\bibitem{martini} H.~Martini, K.~J.~Swanepoel and G. Wei{\ss}, The geometry of Minkowski spaces -- a survey. Part I, \emph{Expositiones Math.} \textbf{19} (2001), 97--142.
\bibitem{mv1} S.~K. Mercourakis and G.~Vassiliadis, Equilateral sets in infinite dimensional Banach Spaces, \emph{Proc. Amer. Math. Soc.} \textbf{142} (2014), 205--212.
\bibitem{mv} S.~K. Mercourakis and G.~Vassiliadis, Equilateral Sets in Banach Spaces of the form $C(K)$, \emph{Studia Math.} \textbf{231} (2015), 241--255.
\bibitem{nyikos} P.~J.~Nyikos, The theory of non-metrizable manifolds, in: \emph{Handbook of Set-theoretic Topology} (K. Kunen and J. E. Vaughan, eds.), North-Holland, Amsterdam, 1984, pp. 633--684.
\bibitem{ostrovskii} M.~I.~Ostrovskii, Weak$^\ast$~closures and derived sets in dual Banach spaces, \emph{Note Mat.}, {\bf 31} (2011), 129--138.
\bibitem{petty} C.~M.~Petty, Equilateral sets in Minkowski spaces, \emph{Proc. Amer. Math. Soc.} \textbf{29} (1971), 369--374.

\bibitem{petunin} Ju.~I.~Petunin, {Conjugate Banach spaces containing subspaces of zero characteristic}, \emph{Dokl. Akad. Nauk SSSR}~{\bf 154} (1964), 527--529; English translation: \emph{Soviet Math. Dokl.}~{\bf 5} (1964), 131--133.
\bibitem{riesz} F.~Riesz, \"Uber lineare Funktionalgleichungen, \emph{Acta Math.}, \textbf{41} (1916), 71--98.
\bibitem{rudin} W. Rudin, \emph{Functional analysis}, second edition, McGraw-Hill, Inc. 1991.
\bibitem{singer} I.~Singer, {On bases in quasi-reflexive Banach spaces}, \emph{Rev. Math. Pures Appl. (Bucarest)}, {\bf 8} (1963), 309--311.
\bibitem{swanepoel} K.~J.~Swanepoel, Equilateral sets in finite-dimensional normed spaces, preprint, 2004,  \texttt{arXiv:0406264v1}.
\bibitem{talagrand} M. Talagrand, {Separabilit\'e vague dans l'espace des measures sur un compact}, \emph{Israel J.~Math.}~{\bf 37} (1980), 171--180.
\bibitem{terenzi1} P.~Terenzi, Successioni regolari negli spazi di Banach, \emph{Milan J. Math.}, \textbf{57} (1) (1987), 275--285.
\bibitem{terenzi}  P.~Terenzi, Equilater sets in Banach spaces, \emph{Boll. Un. Mat. Ital. A} (7) \textbf{3} (1989), 119--124.
\bibitem{todor} S. Todorcevic, Compact subsets of the first Baire class, \emph{J. Amer. Math. Soc.} \textbf{12} (1999), 1179--1212.
\bibitem{wells} J.~H. Wells and L.~R.~Williams, \emph{Embeddings and extensions in Analysis}, Springer-Verlag, 1975.
\end{thebibliography}
\end{document}